\newtheorem{theorem}{Theorem}
\newtheorem{proposition}[theorem]{Proposition}
\newtheorem{lemma}[theorem]{Lemma}
\theoremstyle{definition}
\newtheorem*{def*}{Definition}
\newtheorem{remark}{Remark}
\newtheorem*{theorem*}{Theorem}
\newtheorem*{lemma*}{Lemma}
\newtheorem*{claim*}{Claim}
\newtheorem*{prop*}{Proposition}
\newtheorem*{rem*}{Remark}
\numberwithin{equation}{section}
\title{Dyadic fractional Sobolev spaces:\\ Embeddings and algebra property}
\author{Patricia Alonso Ruiz and Valentia Fragkiadaki}
\date{\today}
\begin{document}

\begin{abstract}
    This paper studies a dyadic version of the classical fractional Sobolev spaces in $\mathbb{R}^n$ for $n\geq 1$. It provides new proofs of the corresponding fractional Sobolev embedding as well as the algebra property of the spaces, which rely solely on dyadic techniques and bypass the Fourier transform. Counterexamples are constructed to verify the failure of the algebra property in low-regularity ranges.  
\end{abstract}
\maketitle

\tableofcontents


\section{Introduction}
Sobolev spaces may be regarded as a point of intersection between the two broad fields of harmonic analysis and partial differential equations (PDEs). On the one hand, Sobolev spaces are fundamental to the theory of PDEs as they are customary spaces of solutions to certain equations. On the other hand, many tools used to study these spaces, such as the Fourier transform or the Littlewood-Paley theory, come from the field of harmonic analysis. In some instances, it may happen that a tool like the Fourier transform is not available or straightforward, while, in turn, another tool like a system of Haar functions is more accessible. With that situation in mind, this paper focuses in using dyadic harmonic analysis to study (dyadic) fractional Sobolev spaces. By developing purely dyadic proofs of corresponding known results from the classical case, one bypasses definitions via Fourier transform and offers an alternative approach to extend definitions and results to settings without a well-defined or straightforward concept of Fourier transform.

\medskip

Dyadic fractional Sobolev spaces were first introduced to study dyadic nonlocal Schr\"odinger equations in a series of works~\cite{ABG13,AA15,ACGN23}. There, partial derivatives were defined in terms of Haar multipliers and dyadic homogeneous integral operators, replacing the commonly used frequency space involving the Fourier transform by a dyadic analogue involving the Haar system. Starting off with that dyadic definition of fractional Sobolev space in $\mathbb{R}$, it was possible to give in~\cite{ARF24} a new proof of the Sobolev embedding theorem and the algebra property of the dyadic Sobolev space $H_{\mathcal{D}}^s(\mathbb{R})$ that only involved dyadic techniques. However, the results in \cite{ARF24} were restricted to the one dimensional Euclidean space, limiting their potential use in applications, such as finding solutions of PDEs in higher dimensions. It is already well-known that in the continuous case, the respective results hold true for the general $n-$dimensions. This paper extends our previous dyadic results from~\cite{ARF24} to the $n$-dimensional case. Dyadic characterizations of different function spaces involving smoothness such as Triebel-Lizorkin and Besov spaces have been investigated by Garrig\'os, Seeger and Ulrich in~\cite{GSU17,GSU18,GSU23}.

\medskip

The first main result, proved in Section~\ref{S:embeddings}, provides a dyadic proof of the classical fractional Sobolev embedding.

\begin{theorem*}
Let $s>0$ and $1<q<\infty$. Then, it holds that $H^{s}_\mathcal{D}(\mathbb{R}^n)\subseteq L^q(\mathbb{R}^n)$ when
\begin{enumerate}[wide=0em,itemsep=.5em, label={\rm (\roman*)}]
\item $0<s<\frac{n}{2}$ and $q=\frac{2n}{n-2s}$,
\item $s=\frac{n}{2}$ and $1<q<\infty$,
\item $s>\frac{n}{2}$ and $q=\infty$.
\end{enumerate}
\end{theorem*}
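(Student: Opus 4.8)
The plan is to treat the three ranges separately, in each case expanding $f=\sum_Q a_Q h_Q$ in the Haar system, where $Q$ runs over Haar indices (a dyadic cube together with one of its $2^n-1$ types), $a_Q=\langle f,h_Q\rangle$, $\|h_Q\|_{L^2}=1$ and $|h_Q|\le|Q|^{-1/2}\mathbbm{1}_Q$. Two elementary facts will be used repeatedly: through each $x$ and at each scale $2^{-j}$ there are at most $2^n-1$ Haar functions $h_Q$ with $x\in Q$; and $\|f\|_{H^s_{\mathcal D}}$ is comparable to $\big(\sum_Q(1+\ell(Q)^{-2s})|a_Q|^2\big)^{1/2}$, $\ell(Q)=|Q|^{1/n}$, so that it dominates both $\|f\|_{L^2}$ and the homogeneous quantity $\|f\|_{\dot H^s_{\mathcal D}}:=\big(\sum_Q\ell(Q)^{-2s}|a_Q|^2\big)^{1/2}$. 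Beyond these, the argument uses only Cauchy--Schwarz, Parseval for the Haar system, and the summation of geometric series; in particular it avoids the Fourier transform and Littlewood--Paley theory.

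\emph{Case (iii), $s>\tfrac n2$.} I would argue pointwise: by Cauchy--Schwarz against the weights $(1+\ell(Q)^{-2s})^{\pm1/2}$,
\[
|f(x)|\le\sum_{Q\ni x}|a_Q|\,|h_Q(x)|\le\|f\|_{H^s_{\mathcal D}}\,\Big(\sum_{Q\ni x}\frac{|Q|^{-1}}{1+\ell(Q)^{-2s}}\Big)^{1/2},
\]
and, counting one cube per scale, the last factor is $\le\big(\sum_{j\in\mathbb Z}(2^n-1)2^{jn}/(1+2^{2js})\big)^{1/2}$, a convergent series because its $j\ge0$ terms are $\lesssim2^{j(n-2s)}$ (summable as $s>\tfrac n2$) and its $j<0$ terms are $\le2^{jn}$ (summable as $n>0$). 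Hence $\|f\|_{L^\infty}\lesssim\|f\|_{H^s_{\mathcal D}}$, the absolute convergence just shown also legitimizing the identification of $f$ with its Haar series.

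\emph{Case (i), $0<s<\tfrac n2$.} The core step is the dyadic Hedberg-type inequality
\[
|f(x)|\lesssim\|f\|_{\dot H^s_{\mathcal D}}^{\,2s/n}\,g(x)^{(n-2s)/n},\qquad g(x):=\Big(\sum_Q\ell(Q)^{-2s}|a_Q|^2\,|h_Q(x)|^2\Big)^{1/2},
\]
for a.e.\ $x$. To prove it, fix $x$, let $Q_j(x)$ be a scale-$2^{-j}$ Haar index with $x\in Q_j(x)$, and split $|f(x)|$ at an integer $N=N(x)$ into $\sum_{j\le N}|a_{Q_j(x)}||h_{Q_j(x)}(x)|+\sum_{j>N}|a_{Q_j(x)}||h_{Q_j(x)}(x)|$. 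Cauchy--Schwarz in $j$ bounds the coarse sum by $\big(\sum_{j\le N}\ell(Q_j)^{-2s}|a_{Q_j}|^2\big)^{1/2}\big(\sum_{j\le N}2^{j(n-2s)}\big)^{1/2}\lesssim\|f\|_{\dot H^s_{\mathcal D}}\,2^{N(n/2-s)}$ (the geometric sum converges since $s<\tfrac n2$), and the fine sum by $g(x)\big(\sum_{j>N}2^{-2js}\big)^{1/2}\lesssim g(x)\,2^{-Ns}$ (convergent since $s>0$). Choosing $N$ with $2^{Nn/2}\approx g(x)/\|f\|_{\dot H^s_{\mathcal D}}$ balances the two and yields the inequality; as $g\in L^2$ by Parseval, this also makes the Haar series of $f$ absolutely convergent a.e. Raising to the power $q=\tfrac{2n}{n-2s}$, for which $q\cdot\tfrac{n-2s}{n}=2$, and integrating,
\[
\|f\|_{L^q}^q\lesssim\|f\|_{\dot H^s_{\mathcal D}}^{2sq/n}\int_{\mathbb R^n}g^2=\|f\|_{\dot H^s_{\mathcal D}}^{2sq/n}\sum_Q\ell(Q)^{-2s}|a_Q|^2=\|f\|_{\dot H^s_{\mathcal D}}^{\,2sq/n+2}=\|f\|_{\dot H^s_{\mathcal D}}^{\,q},
\]
by Parseval and then $\tfrac{2sq}{n}+2=q$, so $\|f\|_{L^q}\lesssim\|f\|_{\dot H^s_{\mathcal D}}\le\|f\|_{H^s_{\mathcal D}}$.

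\emph{Case (ii), $s=\tfrac n2$.} For $q\in[2,\infty)$ I would reduce to (i): from
\[
\sum_Q(1+\ell(Q)^{-2s'})|a_Q|^2\le\sum_{\ell(Q)\le1}(1+\ell(Q)^{-n})|a_Q|^2+2\sum_{\ell(Q)>1}|a_Q|^2\le3\,\|f\|_{H^{n/2}_{\mathcal D}}^2
\]
one gets $H^{n/2}_{\mathcal D}(\mathbb R^n)\hookrightarrow H^{s'}_{\mathcal D}(\mathbb R^n)$ for every $0<s'<\tfrac n2$, and (i) with $q=\tfrac{2n}{n-2s'}$ then gives all $q\in(2,\infty)$, while $q=2$ is the trivial inclusion $H^{n/2}_{\mathcal D}\subseteq L^2$. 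The range $1<q<2$ is more delicate --- it does not follow from $L^2$- and $\dot H^{n/2}$-control alone --- and I would attack it by localizing to unit dyadic cubes, applying H\"older with the $L^{q_0}$-bound above for a fixed $q_0>2$, and summing the local estimates against the dyadic norm; I expect this last summation to be the main obstacle. A secondary point, already in (i), is that the pairing must be kept straight (scalar norm $\leftrightarrow$ coarse scales, square function $g\leftrightarrow$ fine scales): the reversed choice, or using $\|f\|_{L^2}$ in place of $g$, produces the wrong Lebesgue exponent and the integration fails to close at $q=\tfrac{2n}{n-2s}$.
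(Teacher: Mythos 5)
Your strategy is sound and, in cases (i) and (iii), complete; the interesting point is that case (i) follows a genuinely different route from the paper's. There the paper does not estimate anything directly: it proves the reproducing identity $f=(2^s-1)T_{\mathcal D,s}D^s_{\mathcal D}f$ (Lemma~\ref{L:dyadic_MS}) and then imports the $L^2\to L^q$ boundedness of the dyadic fractional integral $T_{\mathcal D,s}$ from \cite{MW74,HRS16}, so the whole case takes three lines. Your dyadic Hedberg inequality $|f(x)|\lesssim\|f\|_{\dot H^s_{\mathcal D}}^{2s/n}g(x)^{(n-2s)/n}$ in effect re-proves that boundedness from scratch, and the computation checks out: the coarse and fine tails $2^{N(n/2-s)}\|f\|_{\dot H^s_{\mathcal D}}$ and $2^{-Ns}g(x)$ balance at $2^{Nn/2}\approx g(x)/\|f\|_{\dot H^s_{\mathcal D}}$, and the exponent arithmetic $q(n-2s)/n=2$, $2sq/n+2=q$ is correct. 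You gain a self-contained argument at the price of redoing the work the paper outsources. Case (iii) is essentially the paper's proof in different clothing: the paper telescopes the averages $\langle f\rangle_{Q_x^{(k)}}$ (Lemma~\ref{L:telescopic_averages}) and treats the unit-scale average separately through the $L^2$ norm, while you run a single weighted Cauchy--Schwarz over all cubes containing $x$ with the weight $1+\ell(Q)^{-2s}$; both reduce to the same geometric series $\sum_{j\ge0}2^{j(n-2s)}$.

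Case (ii) is where your proposal is incomplete, but your instinct about the obstacle is correct and worth sharpening. For $q\in[2,\infty)$ your reduction through $H^{n/2}_{\mathcal D}\hookrightarrow H^{s'}_{\mathcal D}$ for $0<s'<n/2$ is exactly the paper's Remark~\ref{R:monotonicity} combined with case (i). For $1<q<2$, however, no localization argument will close the gap, because the embedding is false in that range: take $f=h_Q^{\varepsilon}$ with $|Q|=2^{kn}$ large, so that $\|f\|_{H^{n/2}_{\mathcal D}}^2=1+|Q|^{-1}\le 2$ while $\|f\|_{L^q}=|Q|^{1/q-1/2}\to\infty$ as $k\to\infty$. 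So the correct range in (ii) is $2\le q<\infty$ (which is also all that the classical result cited in the paper, with $p=2$, delivers); your difficulty with ``summing the local estimates'' is the symptom of a false claim rather than of a missing trick, and you should simply restrict the exponent range instead of trying to complete that step.
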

Looking at a standard textbook on fractional Sobolev spaces, e.g.~\cite[Section 7]{Leo23}, we see that the result is in complete analogy with the classical one, and it is a generalization of the dyadic result in $\mathbb{R}$ from~\cite{ARF24}.

\medskip

The second  main result, shown in Section~\ref{S:algebra_prop}, is a classification of the spaces $H^s_\mathcal{D}(\mathbb{R}^n)$ in terms of the presence (or absence) of the algebra property. Specifically, this refers to the question whether the space is closed under pointwise multiplication, which is of importance in the study of non-linear PDEs with a power non-linearity, see e.g.~\cite{Caz03} and references therein. 

\begin{theorem*}
    Let $0<s<\frac{n}{2}$. The space $H^s_\mathcal{D}(\mathbb{R}^n)$ is an algebra if and only if $s>\frac{n}{2}$.
\end{theorem*}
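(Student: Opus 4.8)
Since the hypothesis imposes $0<s<\tfrac n2$, the condition ``$s>\tfrac n2$'' on the right never holds, so the biconditional is equivalent to the single assertion that $H^s_\mathcal{D}(\mathbb{R}^n)$ is \emph{not} an algebra throughout the range $0<s<\tfrac n2$; the plan is therefore to produce one $f\in H^s_\mathcal{D}(\mathbb{R}^n)$ with $f^2\notin H^s_\mathcal{D}(\mathbb{R}^n)$. (The reverse implication --- that $H^s_\mathcal{D}(\mathbb{R}^n)$ \emph{is} an algebra once $s>\tfrac n2$ --- is vacuous under the stated hypothesis; were it wanted, it would follow from part~(iii) of the embedding theorem, i.e.\ from $H^s_\mathcal{D}(\mathbb{R}^n)\hookrightarrow L^\infty(\mathbb{R}^n)$, together with a dyadic paraproduct decomposition of a product $fg$ into the pieces dictated by the three possibilities for $h_Q^{\epsilon}h_R^{\epsilon'}$ --- equal cubes, strictly nested cubes, or cubes with disjoint interiors --- each estimated by $\|\cdot\|_{L^\infty}\,\|\cdot\|_{\dot H^s_\mathcal{D}}$.)

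The counterexample I would use imitates, in dyadic language, the classical singular function $|x|^{-\alpha}$ with $\alpha$ just below $\tfrac n2-s$. Fix the nested dyadic cubes $Q_j:=[0,2^{-j})^n$, $j\ge1$, shrinking to the origin, let $h_j:=h_{Q_j}^{(1,\dots,1)}$ be the tensor Haar generator on $Q_j$ (the product of the one-dimensional Haar functions of its sides), $L^2$-normalized and chosen so that $h_j\equiv+|Q_j|^{-1/2}$ on the child $Q_{j+1}$, and set
\[ f:=\sum_{j\ge1}c_j\,h_j,\qquad c_j:=2^{-j\beta},\qquad \beta:=\tfrac s2+\tfrac n4, \]
the point being that $0<s<\tfrac n2$ is exactly what gives $s<\beta<\tfrac n2$. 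Since $f$ is already given in the Haar basis, the defining norm yields
\[ \|f\|_{H^s_\mathcal{D}(\mathbb{R}^n)}^2\ \asymp\ \|f\|_{L^2}^2+\sum_{j\ge1}|Q_j|^{-2s/n}c_j^2\ =\ \sum_{j\ge1}2^{-2j\beta}+\sum_{j\ge1}2^{2j(s-\beta)}\ <\ \infty, \]
because $\beta>0$ and $s-\beta=\tfrac s2-\tfrac n4<0$; hence $f\in H^s_\mathcal{D}(\mathbb{R}^n)$. On the annulus $Q_j\setminus Q_{j+1}$ one has $|f|\le\sum_{i\le j}2^{i(n/2-\beta)}\lesssim 2^{j(n/2-\beta)}$, so $\|f\|_{L^4}^4\lesssim\sum_j 2^{j(n-4\beta)}<\infty$ as $\beta>\tfrac n4$; thus $f^2\in L^2(\mathbb{R}^n)\subset L^1_{\mathrm{loc}}(\mathbb{R}^n)$ and the Haar coefficients of $f^2$ are well defined.

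Next I would compute $f^2$. Using $h_i^2=|Q_i|^{-1}\mathbbm{1}_{Q_i}$ and $h_ih_j=|Q_i|^{-1/2}h_j$ for $i<j$ (valid since $\operatorname{supp}h_j\subseteq Q_{i+1}$, on which $h_i\equiv+|Q_i|^{-1/2}$), one obtains
\[ f^2=\sum_{i\ge1}c_i^2|Q_i|^{-1}\mathbbm{1}_{Q_i}\ +\ 2\sum_{j\ge1}\Bigl(\sum_{i<j}c_i|Q_i|^{-1/2}\Bigr)c_j\,h_j. \]
Pairing with $h_m$ and expanding each $\mathbbm{1}_{Q_i}$ in Haar functions --- only the ancestors $Q_1\supset\cdots\supset Q_m$ contribute, all with a $+$ sign along the chain by our normalization --- gives, with no cancellation,
\[ \langle f^2,h_m\rangle\ =\ 2c_m\sum_{i<m}c_i|Q_i|^{-1/2}\ +\ |Q_m|^{-1/2}\sum_{i>m}c_i^2\ \ge\ |Q_m|^{-1/2}c_{m+1}^2\ \gtrsim\ 2^{m(n/2-2\beta)}. \]
Hence
\[ \|f^2\|_{H^s_\mathcal{D}(\mathbb{R}^n)}^2\ \gtrsim\ \sum_{m\ge1}|Q_m|^{-2s/n}\,\bigl|\langle f^2,h_m\rangle\bigr|^2\ \gtrsim\ \sum_{m\ge1}2^{2m\left(s+\frac n2-2\beta\right)}\ =\ \sum_{m\ge1}1\ =\ \infty, \]
by the choice $\beta=\tfrac s2+\tfrac n4$, so $f^2\notin H^s_\mathcal{D}(\mathbb{R}^n)$ and the space is not an algebra.

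The hard part, I expect, is the displayed computation of $\langle f^2,h_m\rangle$ together with its lower bound: one must track which Haar generators arise from the products $h_ih_j$ and from the indicators $\mathbbm{1}_{Q_i}$, verify that the relevant signs agree so that the ``square'' and ``cross'' contributions reinforce rather than cancel, and use that the $H^s_\mathcal{D}$-norm controls the weighted square-sum of Haar coefficients from below. This is transparent in dimension one (following~\cite{ARF24}) but needs a little bookkeeping with the $2^n-1$ tensor Haar functions attached to each cube when $n\ge2$; the remaining estimates are sums of geometric series, and the admissible window $s<\beta<\tfrac n2$ for the exponent is non-empty precisely when $s<\tfrac n2$. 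Finally, I would note that the same construction with a logarithmic correction $c_j=2^{-jn/2}j^{-\gamma}$, $\tfrac12<\gamma\le\tfrac34$, settles the borderline case $s=\tfrac n2$, although it lies outside the range of the present statement.
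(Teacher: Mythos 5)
Your proposal is correct and follows essentially the same route as the paper: a lacunary sum of Haar functions on a nested chain of dyadic cubes shrinking to a point, with coefficients $2^{-j\beta}$ for $\beta$ in the window $(s,\tfrac{s}{2}+\tfrac{n}{4}]$, whose square acquires, via the indicator and cross terms (which reinforce rather than cancel), a Haar coefficient of order $2^{m(n/2-2\beta)}$ at $Q_m$, so that the weighted coefficient sum diverges. The only discrepancies are cosmetic: your superscript $(1,\dots,1)$ clashes with the paper's convention (where that index is excluded and the fully oscillating tensor Haar function carries the index $(0,\dots,0)$), the paper fixes the signs by taking $\varepsilon_0=(0,1,\dots,1)$ and cubes $(-2^{-k},0)^n$ rather than renormalizing, it chooses the exponent strictly inside the admissible window rather than at the endpoint, and it lower-bounds the coefficient of $f^2$ by the cross term rather than the square term --- all of which are interchangeable here.
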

Again, the result is in complete analogy with the classical one proved by Strichartz in~\cite{Str67}, and it generalizes the dyadic result for $n=1$ from~\cite{ARF24}.

\medskip

The techniques used in the present paper to extend the results from~\cite{ARF24} have a similar flavor to those developed there and involve in many cases direct computations. Nevertheless, we want to stress that they are not a straightforward adaptation of the latter: In the one-dimensional case, each dyadic interval gives rise to one cancellative Haar function, whereas in the $n$-dimensional counterpart, each dyadic cube produces $2^n-1$ cancellative Haar functions. 
The main difficulty can for example be seen in Lemma \ref{L:Haar_coeff_sq}, when computing the Haar expansion of $f^2$ in order to prove the algebra property results. In the counterexamples showing that $H^s_\mathcal{D}$ is not an algebra for $s\leq n/2$, the way to overcome this new difficulty coming from the complicated formula for $f^2$ is to build our function using only one out of the $2^n-1$ possible Haar functions for each of the selected cubes. However, this is not an option anymore when proving that $H^s_\mathcal{D}$ is an algebra for $s> n/2$, where we have to directly deal with the formula for $f^2$.

\medskip

The paper is organized as follows: Section~\ref{S:defs_back} provides the necessary background for the dyadic setup we use in $\mathbb{R}^n$, and introduces the dyadic fractional Sobolev space $H^s_{\mathcal{D}}(\mathbb{R}^n)$. This section also includes useful equalities needed later on, which may be of independent interest. Section~\ref{S:embeddings} presents the dyadic proofs of each regularity range of the fractional Sobolev embedding and with a similar structure, Section~\ref{S:algebra_prop} analyzes the algebra property of $H^s_{\mathcal{D}}(\mathbb{R}^n)$, constructing explicit counterexamples in the cases where the property fails. To the best of our knowledge, these are genuinely new.

\section{Definitions and background}\label{S:defs_back}
We start by setting up notation, definitions and by providing some basic results in the dyadic setting that will be used throughout the paper. 
\subsection{The dyadic setting}
We consider $\mathbb{R}^n$ equipped with the $n$-dimensional Euclidean measure, denoted by $|\cdot|$, and with the standard dyadic grid
\begin{multline*}
    \mathcal{D}:=\{Q=I_1\times\ldots I_n\colon~I_1,\ldots, I_n\text{ dyadic intervals}\\
    \text{with }|I_1|=\ldots=|I_n|=2^k\text{ for some }k\in\mathbb{Z}\}.
\end{multline*}
The side length of any $Q\in\mathcal{D}$ is denoted by $\ell(Q)$. Given a fixed cube $Q\in\mathcal{D}$ and $k\geq 0$, the dyadic grid of level $k$ adapted to $Q$ is defined as
\begin{equation}\label{E:def_grid_level_k}
    \mathcal{D}_k(Q):=\Big\{P\in \mathcal{D}\colon P\subset Q\text{ and }\ell(P)=2^{-k}\ell(Q)\Big\}.
\end{equation}
In particular, $\frac{|P|}{|Q|}=2^{-kn}$ for any $P\in\mathcal{D}_k(Q)$. We usually write $\mathcal{D}(Q):=\mathcal{D}_0(Q)$ and note that
\begin{equation*}
    \mathcal{D}(Q)=\bigcup_{k=0}^\infty\mathcal{D}_k(Q).
\end{equation*}
The Haar basis of $L^2(\mathbb{R}^n,dx)$ associated with $\mathcal{D}$ is constructed as follows: When $n=1$, for any dyadic interval $I$ with right half $I_+$ and left half $I_-$ we define the functions
\begin{equation*}
    h_I^{0}(x):=\frac{1}{\sqrt{|I|}}\big(\mathbf{1}_{I_{+}}(x)-\mathbf{1}_{I_{-}}(x)\big)
    \quad\text{and}\quad
    h_I^{1}(x):=\frac{1}{\sqrt{|I|}}\mathbf{1}_{I}(x).
\end{equation*}
For a generic $n\geq 1$, $Q=I_1\times\ldots \times I_n\in\mathcal{D}$ and $\varepsilon\in\{0,1\}^n{\setminus}\{(1,\ldots,1)\}$, one now defines
\begin{equation*}
    h_Q^\varepsilon(x):=\prod_{i=1}^nh_{I_i}^{\varepsilon_i}(x_i),\qquad x=(x_1,\ldots,x_n)\in\mathbb{R}^n.
\end{equation*}
Note that if two cubes $P,Q\in\mathcal{D}$ satisfy $P\subsetneq Q$, then $h_Q^{\varepsilon}$ is constant on $P$. We will often denote that constant by $h_Q^\varepsilon(P)$, which equals either $|Q|^{-1/2}$ or $-|Q|^{-1/2}$, whichever $h_Q^\varepsilon(x)$ takes at $x\in P\subsetneq Q$. In addition, for any $\varepsilon\not\equiv 1$,
\begin{equation*}
    \int_Qh_Q^\varepsilon dx=0.
\end{equation*}

The role of Schwarz functions is played by the linear span associated with the basis $\{h_Q^\varepsilon\}_{Q,\varepsilon}$, namely
\begin{equation*}
    \mathscr{S}_{\mathcal{D}}(\mathbb{R}^n):={\rm span}\big\{h_Q^\varepsilon\colon Q\in\mathcal{D},\,\varepsilon\in\{0,1\}^n{\setminus}\{(1,\ldots,1)\}\big\},
\end{equation*}
which is dense in $L^2(\mathbb{R}^n)$. For simplicity, the results presented in the paper will be stated and proved for functions in $\mathscr{S}_{\mathcal{D}}(\mathbb{R}^n)$. 

\medskip

The inner product of two functions $f,g\in \mathscr{S}_{\mathcal{D}}(\mathbb{R}^n)$ and the average over a cube $Q\in\mathcal{D}$ by
\begin{equation*}
    (f,g):=\int_{\mathbb{R}^n}fg\,dx\qquad\text{and}\qquad \langle f\rangle_Q:= \frac{1}{|Q|} \int_Q f \,dx .
\end{equation*}
In this way, any function $f\in\mathscr{S}(\mathbb{R}^n)$ can be expressed by means of its Haar expansion as
\begin{equation*}
    f=\sum_{\substack{Q\in\mathcal{D}\\ \varepsilon\not\equiv 1}} (f, h_Q^\varepsilon) h_Q^\varepsilon
\end{equation*}
and its average by
\begin{equation}\label{E:avg_char}
    \langle f\rangle_Q = \sum_{\substack{P \supsetneq Q\\ \varepsilon\not\equiv 1}} (f, h_P^\varepsilon) h_P^\varepsilon(Q).
\end{equation}
In addition, Haar functions satisfy the relation
\begin{equation}\label{E:HaarP_char}
    h_Q^\varepsilon(x)=\sum_{R\in\mathcal{D}_k(Q)}h_Q^\varepsilon(R)\mathbf{1}_R(x)
\end{equation}
for any $k\geq 1$, which shall be useful later on in proofs.

\begin{remark}
    The condition $\varepsilon\not\equiv 1$ will be dropped from summations over dyadic cubes to ease the notation as long as no confusion may occur.    
\end{remark}
\subsection{Useful equalities}
The next observations, which may be interesting on their own, will be applied in subsequent proofs. They generalize those in~\cite{ARF24} and reveal the exact dimension-(in)dependence of the constants involved.

\begin{proposition}\label{P:weighted_indicator}
Let $n\geq 1$ and $s>0$. For any $Q\in\mathcal{D}$ it holds that
\begin{equation}\label{E:weighted_indicator}
    \sum_{P\subsetneq Q}|P|^{\frac{s}{n}}\mathbf{1}_P(x)=\frac{1}{1-2^{-s}}|Q|^{\frac{s}{n}}\mathbf{1}_Q(x)
\end{equation}
for any $x\in\mathbb{R}^n$.
\end{proposition}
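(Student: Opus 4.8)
The plan is to fix $x \in \mathbb{R}^n$ and evaluate both sides pointwise. If $x \notin Q$, then every $P \subsetneq Q$ also fails to contain $x$ (since $P \subset Q$), so both sides vanish and there is nothing to prove. Assume therefore $x \in Q$. The key observation is that the dyadic cubes $P$ with $P \subsetneq Q$ and $x \in P$ are totally ordered by inclusion: for each $k \geq 1$ there is exactly one cube $P_k \in \mathcal{D}_k(Q)$ containing $x$, and these are precisely the cubes $P \subsetneq Q$ with $x \in P$. This reduces the left-hand side to a single geometric series.

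Concretely, I would write
\begin{equation*}
    \sum_{P \subsetneq Q} |P|^{\frac{s}{n}} \mathbf{1}_P(x) = \sum_{k=1}^{\infty} |P_k|^{\frac{s}{n}} = \sum_{k=1}^{\infty} \big(2^{-kn}|Q|\big)^{\frac{s}{n}} = |Q|^{\frac{s}{n}} \sum_{k=1}^{\infty} 2^{-ks},
\end{equation*}
using that $|P_k| = 2^{-kn}|Q|$ for $P_k \in \mathcal{D}_k(Q)$, as recorded after \eqref{E:def_grid_level_k}. Summing the geometric series gives $\sum_{k=1}^\infty 2^{-ks} = \frac{2^{-s}}{1-2^{-s}} = \frac{1}{2^s - 1}$. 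Hmm — I should double-check the stated constant: the paper writes $\frac{1}{1-2^{-s}}$, which would correspond to starting the sum at $k=0$, i.e.\ including $P = Q$ itself. Since the proposition sums over $P \subsetneq Q$ strictly, the honest value is $\frac{2^{-s}}{1-2^{-s}} = \frac{1}{2^s-1}$; I would flag this and either adopt the strict-inclusion constant $\frac{1}{2^s-1}$ throughout or note that the intended summation range is $P \subseteq Q$. Either way, the identity holds on all of $\mathbb{R}^n$ because $x \in Q$ forces $\mathbf{1}_Q(x) = 1$ on the right-hand side, matching the computation above.

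There is no real obstacle here: the only subtlety is the bookkeeping of which cubes contribute and getting the constant exactly right (strict versus non-strict inclusion). The argument is entirely elementary once one uses the nesting structure of the dyadic grid to collapse the sum to a geometric series indexed by scale.
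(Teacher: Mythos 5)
Your proof is correct and follows essentially the same route as the paper's: collapse the sum by dyadic scale (one cube per level containing $x$) and sum the resulting geometric series. The constant discrepancy you flag is a genuine typo in the statement rather than a flaw in your argument --- the paper's own proof actually sums over $P\subseteq Q$ starting at $k=0$ (matching $\frac{1}{1-2^{-s}}$), and the way the result is invoked in the proof of Proposition~\ref{P:weighted_hI}, namely applied to $\sum_{P\subseteq R}$ over the children $R\in\mathcal{D}_1(Q)$, confirms that the intended summation range is the non-strict one, exactly as you suggest.
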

\begin{proof}
    If $x\notin Q$, then clearly both sides of~\eqref{E:weighted_indicator} are zero. Let us thus assume that $x\in Q$. Rewriting the left-hand side with the dyadic grids from~\eqref{E:def_grid_level_k} one gets
    \begin{align*}
     \sum_{\substack{P\subseteq Q}}|P|^s\mathbf{1}_P(x)&=\sum_{k=0}^\infty \sum_{\substack{P\in\mathcal{D}_k(Q)}}|P|^{\frac{s}{n}}\mathbf{1}_P(x)\\
     &=\sum_{k=0}^\infty\sum_{\substack{P\in\mathcal{D}_k(Q)}}2^{-ks}|Q|^{\frac{s}{n}}\mathbf{1}_P(x)\\
     &=|Q|^{s/n}\sum_{k=0}^\infty2^{-ks}\bigg(\sum_{\substack{P\in\mathcal{D}_k(Q)\\}}\mathbf{1}_P(x)\bigg)
     =|Q|^{\frac{s}{n}}\frac{1}{1-2^{-s}}\mathbf{1}_Q(x).
    \end{align*}
    In the last equality we use the fact that the cubes in $\mathcal{D}_k(Q)$ are disjoint.
\end{proof}

\begin{proposition}\label{P:weighted_hI}
Let $n\geq 1$ and $s>0$. For any $Q\in\mathcal{D}$ and $\varepsilon\in\{0,1\}{\setminus}\{(1,\ldots,1)\}$ it holds that
 \begin{equation*}
     \sum_{P\subsetneq Q}|P|^{\frac{s}{n}}h^\varepsilon_Q(P)\mathbf{1}_P(x)=\frac{1}{2^s-1}|Q|^{\frac{s}{n}}h_Q^\varepsilon(x)
 \end{equation*}
 for every $x\in \mathbb{R}^n$.
\end{proposition}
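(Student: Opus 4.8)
The plan is to follow the same strategy as in the proof of Proposition~\ref{P:weighted_indicator}, organizing the sum over the sub-cubes of $Q$ by their dyadic level. First I would dispose of the trivial case: if $x\notin Q$, then every $P\subsetneq Q$ fails to contain $x$, so the left-hand side vanishes, and $h_Q^\varepsilon(x)=0$ as well, so both sides agree. Hence I may assume $x\in Q$.

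Next I would rewrite the summation index using the grids $\mathcal{D}_k(Q)$ from~\eqref{E:def_grid_level_k}. Since $P\subsetneq Q$ forces $P\in\mathcal{D}_k(Q)$ for some $k\geq 1$, and since for such $P$ the symbol $h_Q^\varepsilon(P)$ denotes the (constant) value of $h_Q^\varepsilon$ on $P$, I would split
\begin{equation*}
    \sum_{P\subsetneq Q}|P|^{\frac{s}{n}}h_Q^\varepsilon(P)\mathbf{1}_P(x)=\sum_{k=1}^\infty\sum_{P\in\mathcal{D}_k(Q)}|P|^{\frac{s}{n}}h_Q^\varepsilon(P)\mathbf{1}_P(x).
\end{equation*}
Using $|P|=2^{-kn}|Q|$ for $P\in\mathcal{D}_k(Q)$, the factor $|P|^{s/n}=2^{-ks}|Q|^{s/n}$ pulls out of the inner sum, leaving $\sum_{P\in\mathcal{D}_k(Q)}h_Q^\varepsilon(P)\mathbf{1}_P(x)$, which by~\eqref{E:HaarP_char} is exactly $h_Q^\varepsilon(x)$.

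Finally I would sum the resulting geometric series: the expression becomes $|Q|^{s/n}h_Q^\varepsilon(x)\sum_{k=1}^\infty 2^{-ks}=|Q|^{s/n}h_Q^\varepsilon(x)\cdot\frac{2^{-s}}{1-2^{-s}}=\frac{1}{2^s-1}|Q|^{s/n}h_Q^\varepsilon(x)$, as claimed. There is no real obstacle here; the only points requiring a moment of care are the bookkeeping that the sum over levels begins at $k=1$ because the inclusion $P\subsetneq Q$ is strict (this is what changes the constant from $\frac{1}{1-2^{-s}}$ in Proposition~\ref{P:weighted_indicator} to $\frac{1}{2^s-1}$ here), and the justification that $h_Q^\varepsilon$ is genuinely constant on each $P\in\mathcal{D}_k(Q)$ with $k\geq1$ so that $h_Q^\varepsilon(P)$ is well-defined, which was already recorded in Section~\ref{S:defs_back}.
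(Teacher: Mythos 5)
Your proof is correct. It takes a mildly different route from the paper's: you organize the sum $\sum_{P\subsetneq Q}$ directly by dyadic level, applying~\eqref{E:HaarP_char} at every generation $k\geq 1$ to collapse $\sum_{P\in\mathcal{D}_k(Q)}h_Q^\varepsilon(P)\mathbf{1}_P(x)$ into $h_Q^\varepsilon(x)$, and then sum the geometric series $\sum_{k\geq 1}2^{-ks}=\frac{1}{2^s-1}$ in one stroke. The paper instead first splits the sum over the $2^n$ children $R\in\mathcal{D}_1(Q)$, on each of which $h_Q^\varepsilon$ is constant, invokes Proposition~\ref{P:weighted_indicator} as a black box to evaluate $\sum_{P\subseteq R}|P|^{s/n}\mathbf{1}_P(x)$, and only then uses~\eqref{E:HaarP_char} with $k=1$ to reassemble $h_Q^\varepsilon(x)$; the factor $2^{-s}$ converting $\frac{1}{1-2^{-s}}$ into $\frac{1}{2^s-1}$ comes there from $|R|=2^{-n}|Q|$ rather than from the shifted starting index of your series. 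The two arguments are computationally equivalent; yours is self-contained and arguably cleaner as it makes the role of the strict inclusion transparent, while the paper's reuses Proposition~\ref{P:weighted_indicator} and so avoids repeating the partition-and-geometric-series step. Your handling of the trivial case $x\notin Q$ and your remark that $h_Q^\varepsilon$ is constant on each $P\in\mathcal{D}_k(Q)$, $k\geq 1$, are both accurate and sufficient.
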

\begin{proof}
    Note first that $h_Q(P)=h_Q(R)$ for any $P\subseteq R\subsetneq Q$. By virtue of Proposition~\ref{P:weighted_indicator},
    \begin{align*}
      \sum_{P\subsetneq Q}|P|^{\frac{s}{n}}h^\varepsilon_Q(P)\mathbf{1}_P(x)&
      =\sum_{R\in \mathcal{D}_1(Q)}\sum_{P\subseteq R}|P|^{\frac{s}{n}}h^\varepsilon_Q(P)\mathbf{1}_P(x)\\
      &=\sum_{R\in \mathcal{D}_1(Q)}\sum_{P\subseteq R}|P|^{\frac{s}{n}}h^\varepsilon_Q(R)\mathbf{1}_P(x)\\
      &=\sum_{R\in \mathcal{D}_1(Q)}h^\varepsilon_Q(R)\sum_{P\subseteq R}|P|^{\frac{s}{n}}\mathbf{1}_P(x)\\
      &=\frac{1}{1-2^{-s}}\sum_{R\in \mathcal{D}_1(Q)}h^\varepsilon_Q(R)|R|^{\frac{s}{n}}\mathbf{1}_{R}(x).
    \end{align*}
    Further, since $|R|=2^{-n}|Q|$, it follows from the above and~\eqref{E:HaarP_char} with $k=1$ that
    \[
    \sum_{P\subsetneq Q}|P|^{\frac{s}{n}}h^\varepsilon_Q(P)\mathbf{1}_P(x)=\frac{2^{-s}}{1-2^{-s}}|Q|^{\frac{s}{n}}\!\!\!\sum_{R\in \mathcal{D}_1(Q)}h^\varepsilon_Q(R)\mathbf{1}_{R}(x)=\frac{1}{2^s-1}|Q|^{\frac{s}{n}}h_Q^\varepsilon(x)
    \]
    as we wanted to prove.
\end{proof}

We finish this section by recording a useful observation. Recall the convention
\begin{equation}\label{E:def_index_sum}
    (\varepsilon+\tilde{\varepsilon})_i:=
    \begin{cases}
        0&\text{if }\varepsilon_i=\tilde{\varepsilon}_i,\\
        1&\text{if }\varepsilon_i\neq \tilde{\varepsilon}_i,\\
    \end{cases}
    \qquad i=1,\ldots,n,
\end{equation}
for any $\varepsilon,\tilde{\varepsilon}\in\{0,1\}^n$.
\begin{lemma}\label{L:product_Haar}
    For any $Q,\tilde{Q}\in\mathcal{D}$ and any $\varepsilon,\tilde{\varepsilon}\in\{0,1\}^n{\setminus}\{(1,\ldots,1)\}$,
    \begin{equation*}
        h_Q^\varepsilon h_Q^{\tilde{\varepsilon}}=h_Q^{\varepsilon+\tilde{\varepsilon}}        
    \end{equation*}
    and
    \begin{equation*}
        h_{Q}^\varepsilon h_{\tilde{Q}}^{\tilde{\varepsilon}}=h_Q^\varepsilon(\tilde{Q})h_{\tilde{Q}}^{\tilde{\varepsilon}},
    \end{equation*}
    for $\varepsilon\not\equiv\tilde{\varepsilon}$ and $\tilde{Q}\subsetneq Q$.
\end{lemma}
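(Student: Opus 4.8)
The plan is to prove the two identities in Lemma~\ref{L:product_Haar} directly from the definitions, reducing both to the one-dimensional case and then assembling the tensor products.

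For the first identity, I would write $h_Q^\varepsilon = \prod_{i=1}^n h_{I_i}^{\varepsilon_i}(x_i)$ and $h_Q^{\tilde\varepsilon} = \prod_{i=1}^n h_{I_i}^{\tilde\varepsilon_i}(x_i)$, so that $h_Q^\varepsilon h_Q^{\tilde\varepsilon} = \prod_{i=1}^n h_{I_i}^{\varepsilon_i}(x_i)h_{I_i}^{\tilde\varepsilon_i}(x_i)$. Thus it suffices to check the one-dimensional relations $h_I^a h_I^b = h_I^{(a+b) \bmod 2}$ for $a,b \in \{0,1\}$ and a dyadic interval $I$. There are three cases. If $a = b = 1$, then $h_I^1 h_I^1 = \tfrac{1}{|I|}\mathbf 1_I = \tfrac{1}{\sqrt{|I|}} h_I^1$; here the normalization does not match $h_I^0$, but note that in the $n$-dimensional setting the relevant index combination $(\varepsilon+\tilde\varepsilon)_i$ follows the XOR convention~\eqref{E:def_index_sum}, and since $\varepsilon \not\equiv 1$ and $\tilde\varepsilon \not\equiv 1$ are assumed, the product $h_Q^{\varepsilon+\tilde\varepsilon}$ is a legitimate Haar function as soon as $\varepsilon + \tilde\varepsilon \not\equiv 1$, which the statement implicitly requires for the identity to make sense; I would record this hypothesis explicitly. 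If $a = b = 0$, then $h_I^0 h_I^0 = \tfrac{1}{|I|}(\mathbf 1_{I_+} - \mathbf 1_{I_-})^2 = \tfrac{1}{|I|}\mathbf 1_I = \tfrac{1}{\sqrt{|I|}} h_I^1$, matching $h_I^{(0+0) \bmod 2}$ up to the same normalization subtlety. If $\{a,b\} = \{0,1\}$, say $a = 0, b = 1$, then $h_I^0 h_I^1 = \tfrac{1}{|I|}(\mathbf 1_{I_+} - \mathbf 1_{I_-}) = \tfrac{1}{\sqrt{|I|}} h_I^0$, which is exactly $h_I^{(0+1)\bmod 2}$. Collecting the cases coordinatewise and multiplying the normalization constants $|I_i|^{-1/2}$ (which combine to $|Q|^{-1/2}$, the correct normalization for $h_Q^{\varepsilon+\tilde\varepsilon}$) gives the claim.

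For the second identity, assume $\tilde Q \subsetneq Q$. Since $h_Q^\varepsilon$ is constant on any cube strictly contained in $Q$, in particular on $\tilde Q$, and since $h_{\tilde Q}^{\tilde\varepsilon}$ is supported on $\tilde Q$, we have $h_Q^\varepsilon(x) h_{\tilde Q}^{\tilde\varepsilon}(x) = h_Q^\varepsilon(\tilde Q)\, h_{\tilde Q}^{\tilde\varepsilon}(x)$ for all $x \in \tilde Q$, while both sides vanish for $x \notin \tilde Q$. This is essentially immediate from the remark following the construction of the Haar basis (that $h_Q^\varepsilon$ takes a constant value $h_Q^\varepsilon(P)$ on any $P \subsetneq Q$) together with $\operatorname{supp} h_{\tilde Q}^{\tilde\varepsilon} \subseteq \tilde Q$.

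The main obstacle — really the only subtle point — is the bookkeeping around the edge case $\varepsilon + \tilde\varepsilon \equiv (1,\dots,1)$ and the mismatch between the one-dimensional product $h_I^0 h_I^0$ (or $h_I^1 h_I^1$) producing the \emph{non-mean-zero} function $h_I^1$ rather than $h_I^0$. I would handle this by stating the first identity under the standing hypothesis that all the indices involved, including $\varepsilon + \tilde\varepsilon$, lie in $\{0,1\}^n \setminus \{(1,\dots,1)\}$, and by being careful that the convention~\eqref{E:def_index_sum} is exactly XOR so that the coordinatewise computation above composes correctly; everything else is a routine verification of a finite number of one-dimensional cases followed by taking products over coordinates.
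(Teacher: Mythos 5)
Your treatment of the second identity is correct and complete: $h_Q^\varepsilon$ is constant on the strict dyadic subcube $\tilde Q$ and $h_{\tilde Q}^{\tilde\varepsilon}$ is supported in $\tilde Q$, which is all that is needed (the paper records the lemma without proof, so a direct verification like yours is the natural route). The problem is the first identity: your own one-dimensional case analysis contradicts the conclusion you draw from it, and the discrepancies you set aside as a ``normalization subtlety'' are not normalization issues at all.

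With the paper's labelling, $h_I^0$ is the cancellative function and $h_I^1=|I|^{-1/2}\mathbf{1}_I$, and your computations give $h_I^0h_I^0=h_I^1h_I^1=|I|^{-1/2}h_I^1$ and $h_I^0h_I^1=|I|^{-1/2}h_I^0$. Two consequences follow. First, equal coordinate indices produce a multiple of $h_I^1$, which is \emph{orthogonal} to $h_I^0$, so ``matches $h_I^{(0+0)\bmod 2}$ up to normalization'' is false; and different indices produce $h_I^0$, so the claim that $h_I^0h_I^1$ ``is exactly $h_I^{(0+1)\bmod 2}$'' is also false. The index that actually comes out is the \emph{complement} of the convention~\eqref{E:def_index_sum}: it equals $1$ when $\varepsilon_i=\tilde\varepsilon_i$ and $0$ otherwise. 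Second, each coordinate contributes an extra factor $|I_i|^{-1/2}$ \emph{beyond} the normalization already built into the Haar function you land on, so these factors do not supply ``the correct normalization for $h_Q^{\varepsilon+\tilde\varepsilon}$'' but an additional $|Q|^{-1/2}$. What your cases actually prove is that, for $\varepsilon\neq\tilde\varepsilon$,
\[
h_Q^\varepsilon h_Q^{\tilde\varepsilon}=\frac{1}{\sqrt{|Q|}}\,h_Q^{\sigma},
\qquad
\sigma_i:=\begin{cases}1&\text{if }\varepsilon_i=\tilde\varepsilon_i,\\[2pt] 0&\text{if }\varepsilon_i\neq\tilde\varepsilon_i,\end{cases}
\]
where $\sigma\neq(1,\dots,1)$ precisely because $\varepsilon\neq\tilde\varepsilon$; for $\varepsilon=\tilde\varepsilon$ one gets $(h_Q^\varepsilon)^2=|Q|^{-1}\mathbf{1}_Q$, which is not a multiple of any admissible Haar function — that diagonal case, not ``$\varepsilon+\tilde\varepsilon\equiv(1,\dots,1)$'', is the genuine edge case. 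The extra hypothesis you propose does not repair the stated formula: for $n=2$, $\varepsilon=(0,0)$, $\tilde\varepsilon=(0,1)$ one has $\varepsilon+\tilde\varepsilon=(0,1)$, yet $h_Q^{(0,0)}h_Q^{(0,1)}=|Q|^{-1/2}h_Q^{(1,0)}$, which is orthogonal to $h_Q^{(0,1)}$. In short, the identity as printed cannot hold verbatim under~\eqref{E:def_index_sum}, so a correct write-up must derive the corrected version displayed above (together with the diagonal case), flag the flipped index and the factor $|Q|^{-1/2}$, and note that this corrected form is what is actually used downstream — for instance the diagonal terms in the derivation of~\eqref{E:Haar_coeff_sq} are handled through $(\mathbf{1}_Q,h_P^\eta)$ — rather than assert that the one-dimensional cases ``match'' a formula they in fact disprove.
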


\subsection{Fractional Sobolev spaces}
We work with the generalization to an arbitrary dimension $n$ of the dyadic Sobolev spaces considered in~\cite{ABG13,AABG16,ACGN23,ARF24}. We remind the reader that this definition is inspired by the classical (Euclidean) connection in harmonic analysis between the fractional Laplace operator $(-\Delta)^{s/2}$ and the singular operator
\begin{equation*}
I_sf(x):=\int_{\mathbb{R}^n}\frac{1}{|x-y|^{n-s}}f(y)dy,
\end{equation*}
where $0<s<n$. Its dyadic analogue, defined as
\begin{equation*}
T_{\mathcal{D},s}f:=  \sum_{Q \in \mathcal{D}} |Q|^{\frac{s}{n}} \langle f \rangle_Q \mathbf{1}_Q
\end{equation*}
for any $f\in\mathscr{S}(\mathbb{R}^n)$, has been object of study in many works, see e.g.~\cite{MW74,Saw88,HRS16} and references therein. For a precise comparison between $I_sf$ and $T_{\mathcal{D},s}f$ we refer to~\cite[Proposition 3.6]{Cru17}.

\medskip

With the latter connection in mind, we define the dyadic analogue to the fractional derivative $(-\Delta)^{s/2}$ by
\begin{equation*}
    D^s_{\mathcal{D}}f:=\sum_{\substack{Q\in\mathcal{D}\\ \varepsilon\not\equiv 1}}|Q|^{-\frac{s}{n}}(f,h^\varepsilon_Q)h^\varepsilon_Q(x)
\end{equation*}
for $f\in\mathscr{S}(\mathbb{R}^n)$. 
Along the lines of the classical definition of the fractional Sobolev space $H^s(\mathbb{R}^n)$ via Fourier transform, and in analogy to the one-dimensional case~\cite[Theorem 4.1]{AA15}, we now define the seminorm
\begin{equation}\label{E:def_Hs_seminorm}
    \|f\|_{\dot{H}^s_{\mathcal{D}}(\mathbb{R})}:=\|D^s_{\mathcal{D}}f\|_{L^2(\mathbb{R}^n,dx)}= \bigg( \sum_{\substack{Q\in\mathcal{D}\\ \varepsilon\not\equiv 1}} |Q|^{-\frac{2s}{n}}|(f,h^\varepsilon_Q)|^2 \bigg)^{1/2} 
\end{equation}
and the dyadic Sobolev space
\begin{equation}
    H_{\mathcal{D}}^s(\mathbb{R}^n):=\{f\in L^2(\mathbb{R}^n,dx)\colon~D^s_{\mathcal{D}}f\in L^2(\mathbb{R}^n,dx)\}
\end{equation}
equipped with the norm 
\begin{equation}\label{E:def_Hs_norm}
    \|f\|_{H^s_{\mathcal{D}}(\mathbb{R}^n)}:=\big(\|f\|_{\dot{H}^s_{\mathcal{D}}(\mathbb{R}^n)}^2+\|f\|_{L^2(\mathbb{R}^n,dx)}^2\big)^{1/2}.
\end{equation}
As a first observation, we show how that the classical connection
\begin{equation*}\label{E:Muscalu-Schlag}
    f=C_{n,s}I_s(-\Delta)^{s/2}f
\end{equation*}
for some (explicit) constant $C_{n,s}>0$ and any Schwarz function $f$, see e.g.~\cite[p.177]{MS13}, also holds in the dyadic setting. Note, however, that in this case the constant is \emph{independent} of the dimension $n$.
\begin{lemma}\label{L:dyadic_MS}
    Let $0<s<n$. For any $f\in \mathscr{S}_{\mathcal{D}}^s(\mathbb{R}^n)$,
    \[
    f=(2^s-1)T_{\mathcal{D},s}D_{\mathcal{D}}^sf.
    \]
\end{lemma}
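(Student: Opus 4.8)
The plan is to compute $T_{\mathcal{D},s}D_{\mathcal{D}}^sf$ directly from the definitions and collapse the resulting double sum using Proposition~\ref{P:weighted_hI}. First I would write $g:=D_{\mathcal{D}}^sf=\sum_{P,\tilde\varepsilon}|P|^{-s/n}(f,h_P^{\tilde\varepsilon})h_P^{\tilde\varepsilon}$ and recall that $T_{\mathcal{D},s}g=\sum_{Q}|Q|^{s/n}\langle g\rangle_Q\mathbf{1}_Q$. The key point is to identify $\langle g\rangle_Q$. Using~\eqref{E:avg_char}, only Haar functions $h_P^{\tilde\varepsilon}$ with $P\supsetneq Q$ contribute to $\langle g\rangle_Q$, and on such $P$ the coefficient of $h_P^{\tilde\varepsilon}$ in $g$ is $|P|^{-s/n}(f,h_P^{\tilde\varepsilon})$. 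Hence
\begin{equation*}
\langle g\rangle_Q=\sum_{\substack{P\supsetneq Q\\ \tilde\varepsilon\not\equiv 1}}|P|^{-s/n}(f,h_P^{\tilde\varepsilon})h_P^{\tilde\varepsilon}(Q).
\end{equation*}

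Next I would substitute this into $T_{\mathcal{D},s}g$ and interchange the order of summation, grouping by the ``outer'' cube $P$. This yields
\begin{equation*}
T_{\mathcal{D},s}D_{\mathcal{D}}^sf=\sum_{\substack{P\in\mathcal{D}\\ \tilde\varepsilon\not\equiv 1}}|P|^{-s/n}(f,h_P^{\tilde\varepsilon})\sum_{Q\subsetneq P}|Q|^{s/n}h_P^{\tilde\varepsilon}(Q)\mathbf{1}_Q(x).
\end{equation*}
The inner sum is precisely the object evaluated in Proposition~\ref{P:weighted_hI}, which gives $\sum_{Q\subsetneq P}|Q|^{s/n}h_P^{\tilde\varepsilon}(Q)\mathbf{1}_Q(x)=\frac{1}{2^s-1}|P|^{s/n}h_P^{\tilde\varepsilon}(x)$. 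Plugging this back in, the factors $|P|^{-s/n}$ and $|P|^{s/n}$ cancel and we are left with $\frac{1}{2^s-1}\sum_{P,\tilde\varepsilon}(f,h_P^{\tilde\varepsilon})h_P^{\tilde\varepsilon}(x)=\frac{1}{2^s-1}f(x)$, which is the claimed identity after multiplying through by $2^s-1$.

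The main technical point to handle carefully is the interchange of the order of summation: since $f\in\mathscr{S}_{\mathcal{D}}(\mathbb{R}^n)$, its Haar expansion is a finite sum, so $g=D_{\mathcal{D}}^sf$ is also a finite linear combination of Haar functions and all sums over the ``outer'' index $P$ are finite, making the rearrangement unproblematic; the only genuinely infinite sum is the one over $Q\subsetneq P$ in Proposition~\ref{P:weighted_hI}, whose convergence (for $x$ in finitely many nested cubes) is already established there. One should also check the edge cases where $x$ lies outside all relevant cubes, but then both sides vanish. The constraint $0<s<n$ is what guarantees $T_{\mathcal{D},s}$ is the intended (well-defined) operator, though the algebraic identity itself only uses $s>0$ via Proposition~\ref{P:weighted_hI}. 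I expect no serious obstacle here; the lemma is essentially a bookkeeping consequence of the two preceding propositions, and the cleanest write-up simply chains the displayed equalities above.
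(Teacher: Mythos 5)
Your proposal is correct and follows essentially the same route as the paper: expand the average $\langle g\rangle_Q$ via~\eqref{E:avg_char}, interchange the order of summation, and collapse the inner sum with Proposition~\ref{P:weighted_hI}. The only cosmetic difference is that the paper computes $(2^s-1)T_{\mathcal{D},s}g$ for a generic $g$ and substitutes $g=D^s_{\mathcal{D}}f$ at the end, whereas you insert the coefficients $|P|^{-s/n}(f,h_P^{\tilde\varepsilon})$ from the start.
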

\begin{proof}
Consider first $g\in\mathscr{S}_{\mathcal{D}}^s(\mathbb{R}^n)$. Using the Haar expansion of the average from~\eqref{E:avg_char}, interchanging the order of summation and applying Proposition~\ref{P:weighted_hI},
    \begin{align*}
        (2^s-1)T_{\mathcal{D},s}g&=(2^s-1)\sum_{Q \in \mathcal{D}} |Q|^{\frac{s}{n}} \langle g \rangle_Q \mathbf{1}_Q\\
        &=(2^s-1)\sum_{Q \in \mathcal{D}} |Q|^{\frac{s}{n}}\mathbf{1}_Q\sum_{\substack{P \supsetneq Q\\ \varepsilon\not\equiv 1}} (g, h^\varepsilon_P) h^\varepsilon_P(Q)\\
        &=(2^s-1)\sum_{\substack{P \in \mathcal{D}\\ \varepsilon\not\equiv 1}}(g, h^\varepsilon_P)\sum_{Q
        \subsetneq P}|Q|^{\frac{s}{n}}h_P^\varepsilon(Q)\mathbf{1}_Q\\
        &=\sum_{\substack{P \in \mathcal{D}\\ \varepsilon\not\equiv 1}}|P|^{\frac{s}{n}}(g,h_P^\varepsilon)h_P^\varepsilon.
    \end{align*}
    Plugging $g=D^s_{\mathcal{D}}f$ the result follows.
\end{proof}
Second, we note that only cubes of ``small'' volume matter to the $H^s$-norm~\eqref{E:def_Hs_seminorm}.
\begin{lemma}\label{L:norm_equivalence}
    Let $0<s<n$. For any $f\in \mathscr{S}_{\mathcal{D}}(\mathbb{R}^n)$ 
    \begin{equation}\label{E:seminorm_equivalence}
    \frac{1}{2}\|f\|_{H^s_{\mathcal{D}}(\mathbb{R}^n)}^2\leq \|f\|_{L^2}^2 + \sum_{\substack{|Q|<1 \\ \varepsilon\not\equiv 1}} |Q|^{-\frac{2s}{n}}|(f,h_Q^\varepsilon)|^2\leq \|f\|_{H^s_{\mathcal{D}}(\mathbb{R}^n)}^2.
    \end{equation}
\end{lemma}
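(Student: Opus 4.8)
The plan is to split the Haar coefficient sum in the $\dot H^s_{\mathcal D}$-seminorm~\eqref{E:def_Hs_seminorm} according to whether the cube $Q$ has volume $|Q|\geq 1$ or $|Q|<1$, and to control the ``large cube'' part by the $L^2$-norm. Concretely, write
\begin{equation*}
  \|f\|_{\dot H^s_{\mathcal D}(\mathbb R^n)}^2
  = \sum_{\substack{|Q|\geq 1\\ \varepsilon\not\equiv 1}} |Q|^{-\frac{2s}{n}}|(f,h_Q^\varepsilon)|^2
  + \sum_{\substack{|Q|<1\\ \varepsilon\not\equiv 1}} |Q|^{-\frac{2s}{n}}|(f,h_Q^\varepsilon)|^2.
\end{equation*}
For the first sum, since $|Q|\geq 1$ and $s>0$ we have $|Q|^{-2s/n}\leq 1$, so that
\begin{equation*}
  \sum_{\substack{|Q|\geq 1\\ \varepsilon\not\equiv 1}} |Q|^{-\frac{2s}{n}}|(f,h_Q^\varepsilon)|^2
  \leq \sum_{\substack{Q\in\mathcal D\\ \varepsilon\not\equiv 1}} |(f,h_Q^\varepsilon)|^2
  = \|f\|_{L^2(\mathbb R^n)}^2,
\end{equation*}
the last equality being Parseval's identity for the Haar basis. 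Hence
\begin{equation*}
  \|f\|_{\dot H^s_{\mathcal D}(\mathbb R^n)}^2
  \leq \|f\|_{L^2(\mathbb R^n)}^2 + \sum_{\substack{|Q|<1\\ \varepsilon\not\equiv 1}} |Q|^{-\frac{2s}{n}}|(f,h_Q^\varepsilon)|^2,
\end{equation*}
and adding $\|f\|_{L^2}^2$ to both sides while recalling the definition~\eqref{E:def_Hs_norm} of $\|f\|_{H^s_{\mathcal D}}$ gives
\begin{equation*}
  \|f\|_{H^s_{\mathcal D}(\mathbb R^n)}^2
  \leq 2\|f\|_{L^2(\mathbb R^n)}^2 + \sum_{\substack{|Q|<1\\ \varepsilon\not\equiv 1}} |Q|^{-\frac{2s}{n}}|(f,h_Q^\varepsilon)|^2
  \leq 2\Big(\|f\|_{L^2(\mathbb R^n)}^2 + \sum_{\substack{|Q|<1\\ \varepsilon\not\equiv 1}} |Q|^{-\frac{2s}{n}}|(f,h_Q^\varepsilon)|^2\Big),
\end{equation*}
which is the left inequality in~\eqref{E:seminorm_equivalence} after dividing by $2$.

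For the right inequality, observe that the middle quantity in~\eqref{E:seminorm_equivalence} is bounded above by $\|f\|_{L^2}^2$ plus the full seminorm sum: indeed, restricting the sum to $|Q|<1$ only drops nonnegative terms, so
\begin{equation*}
  \|f\|_{L^2(\mathbb R^n)}^2 + \sum_{\substack{|Q|<1\\ \varepsilon\not\equiv 1}} |Q|^{-\frac{2s}{n}}|(f,h_Q^\varepsilon)|^2
  \leq \|f\|_{L^2(\mathbb R^n)}^2 + \sum_{\substack{Q\in\mathcal D\\ \varepsilon\not\equiv 1}} |Q|^{-\frac{2s}{n}}|(f,h_Q^\varepsilon)|^2
  = \|f\|_{H^s_{\mathcal D}(\mathbb R^n)}^2,
\end{equation*}
using~\eqref{E:def_Hs_seminorm} and~\eqref{E:def_Hs_norm}. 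This closes both inequalities. The argument is entirely elementary; the only mild point to be careful about is the direction of the inequality $|Q|^{-2s/n}\leq 1$ for $|Q|\geq 1$, which requires $s>0$ (so the hypothesis $0<s<n$ is used only through $s>0$ here), together with the use of Parseval to identify $\sum_{Q,\varepsilon}|(f,h_Q^\varepsilon)|^2$ with $\|f\|_{L^2}^2$. No serious obstacle is expected.
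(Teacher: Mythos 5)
Your proof is correct and follows essentially the same route as the paper: split the seminorm sum over $|Q|\geq 1$ versus $|Q|<1$, bound the large-cube part by $\|f\|_{L^2}^2$ via Parseval since $|Q|^{-2s/n}\leq 1$ there, and absorb the resulting extra $L^2$ term into the factor $2$. The right-hand inequality is, as you note, immediate from dropping nonnegative terms.
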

\begin{proof}
    The second inequality in~\eqref{E:seminorm_equivalence} follows directly from the definition of $\|f\|_{H_{\mathcal{D}^s(\mathbb{R}^n)}}$ from~\eqref{E:def_Hs_norm}. For the first inequality, note that
    \begin{align*}
        \sum_{\substack{Q\in\mathcal{D} \\ \varepsilon\not\equiv 1}} |Q|^{-\frac{2s}{n}}|(f,h_Q^\varepsilon)|^2
        &=\sum_{\substack{|Q|<1 \\ \varepsilon\not\equiv 1}} |Q|^{-\frac{2s}{n}}|(f,h_Q^\varepsilon)|^2+\sum_{\substack{|Q|\geq 1 \\ \varepsilon\not\equiv 1}} |Q|^{-\frac{2s}{n}}|(f,h_Q^\varepsilon)|^2\\
        &\leq \sum_{\substack{|Q|<1 \\ \varepsilon\not\equiv 1}} |Q|^{-\frac{2s}{n}}|(f,h_Q^\varepsilon)|^2+\sum_{\substack{Q\in\mathcal{D}\\ \varepsilon\not\equiv 1}} (f,h_Q^\varepsilon)^2\\
        &=\sum_{\substack{|Q|<1 \\ \varepsilon\not\equiv 1}} |Q|^{-\frac{2s}{n}}|(f,h_Q^\varepsilon)|^2+\|f\|_{L^2(\mathbb{R}^n)}^2.
    \end{align*}
\end{proof}

As a consequence of the latter, one recovers the classical relationship between spaces with different regularities.

\begin{remark}\label{R:monotonicity}
    For any $0<s_1<s_2<n$ and $Q\in\mathcal{D}$ with $|Q|\leq 1$ it holds that $|Q|^{-\frac{2s_1}{n}}\leq |Q|^{-\frac{2s_2}{n}}$, whence $H^{s_2}_{\mathcal{D}}(\mathbb{R}^n)\subseteq H^{s_1}_{\mathcal{D}}(\mathbb{R}^n)$.
\end{remark}

From now on and throughout the paper, we will also omit the subscript $\mathcal{D}$ from norms and operators as long as no confusion may occur.

\section{Fractional embeddings} \label{S:embeddings}
This section is devoted to proving the dyadic analogue to the classical fractional embeddings for the Sobolev space $H^s(\mathbb{R}^n)$, see e.g.~\cite[Chapter 7]{Leo23}, now using only dyadic methods. Throughout the section, the expression $A\apprle_{n,s} B$ means that $A$ is bounded by a constant multiple of $B$ and the constant depends on $s$ and $n$. The expression $A\simeq_{n,s} B$ means that $A$ is bounded above and below by a constant multiple of $B$ with a constant that depends on $s$ and $n$.
\subsection{Case \texorpdfstring{$0<s<n/2$}{[small]}}
We start with the analogue to the Sobolev-Gagliardo-Nirenberg embedding, see e.g.~\cite[Theorem 7.6]{Leo23}. To this end, Lemma~\ref{L:dyadic_MS} will allow us to apply known estimates for the operator $T_s$ to one of the classical proofs of the Euclidean fractional Sobolev embedding in the range $0<s<n/2$.
\begin{proposition}
    Let $0<s<\frac{n}{2}$. Then, $\displaystyle H^s(\mathbb{R}^n)\subseteq L^{\frac{2n}{n-2s}}(\mathbb{R}^n)$. In particular,
    \begin{equation*}
    \|f\|_{L^{q}(\mathbb{R}^n)}\apprle_{n,s} \|f\|_{\dot{H}^s(\mathbb{R}^n)}
    \end{equation*}
    for any $f\in \mathscr{S}(\mathbb{R}^n)$ with $q:=\frac{2n}{n-2s}$.
\end{proposition}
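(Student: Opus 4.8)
The plan is to reduce the embedding to a mapping property of the dyadic fractional integral $T_s$ (recall that throughout this section the subscript $\mathcal{D}$ is suppressed), and then to establish that property by a purely dyadic Hedberg-type pointwise bound, so that the Fourier transform is never used. First I would invoke the dyadic Muscalu--Schlag identity of Lemma~\ref{L:dyadic_MS}: for $f\in\mathscr{S}(\mathbb{R}^n)$ one has $f=(2^s-1)T_s g$ with $g:=D^s f$, and by the very definition~\eqref{E:def_Hs_seminorm} of the seminorm, $\|g\|_{L^2(\mathbb{R}^n)}=\|D^s f\|_{L^2(\mathbb{R}^n)}=\|f\|_{\dot H^s(\mathbb{R}^n)}$. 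Since the factor $2^s-1$ is absorbed into $\apprle_{n,s}$, it suffices to prove that $T_s$ maps $L^2(\mathbb{R}^n)$ boundedly into $L^q(\mathbb{R}^n)$ with $q=\tfrac{2n}{n-2s}$, which is exactly the dyadic analogue of the Hardy--Littlewood--Sobolev inequality for the fractional integral.

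To prove the $L^2\to L^q$ bound for $T_s$ by dyadic means, I would first reduce to nonnegative functions: since $|\langle g\rangle_Q|\le\langle|g|\rangle_Q$, one has the pointwise bound $|T_sg(x)|\le T_s|g|(x)$, so it is enough to treat $0\le g\in\mathscr{S}(\mathbb{R}^n)$. Fixing a point $x$ off the (measure-zero) dyadic skeleton, the cubes of $\mathcal{D}$ containing $x$ form a nested bi-infinite chain $\{Q_k\}_{k\in\mathbb{Z}}$ with $\ell(Q_k)=2^k$, so that $T_sg(x)=\sum_{k\in\mathbb{Z}}2^{ks}\langle g\rangle_{Q_k}$. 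I would then split this sum at a level $N$, estimating the small cubes by the dyadic maximal function, $\langle g\rangle_{Q_k}\le Mg(x)$, and the large cubes by Hölder, $\langle g\rangle_{Q_k}\le\|g\|_{L^2}|Q_k|^{-1/2}=\|g\|_{L^2}2^{-kn/2}$. Summing the two geometric series (the large-cube series converges precisely because $s<\tfrac n2$) yields $T_sg(x)\apprle_{n,s}2^{Ns}Mg(x)+2^{N(s-n/2)}\|g\|_{L^2}$, and optimizing the balance over $N$ gives the Hedberg-type inequality
\begin{equation*}
T_sg(x)\apprle_{n,s}\|g\|_{L^2(\mathbb{R}^n)}^{2s/n}\,\big(Mg(x)\big)^{1-2s/n}.
\end{equation*}

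Finally, raising this to the power $q$ and integrating, together with the arithmetic identity $q\,(1-\tfrac{2s}{n})=2$ (equivalent to $\tfrac1q=\tfrac12-\tfrac sn$), reduces everything to the $L^2$ boundedness of the dyadic maximal operator:
\begin{equation*}
\|T_sg\|_{L^q(\mathbb{R}^n)}\apprle_{n,s}\|g\|_{L^2}^{2s/n}\,\|Mg\|_{L^2}^{1-2s/n}\apprle_{n,s}\|g\|_{L^2(\mathbb{R}^n)},
\end{equation*}
the last step using that $M$ is bounded on $L^2$ (valid since $2>1$). Combining with the reduction above gives $\|f\|_{L^q}=(2^s-1)\|T_sg\|_{L^q}\apprle_{n,s}\|g\|_{L^2}=\|f\|_{\dot H^s}$, which is exactly the stated estimate, and the inclusion $H^s(\mathbb{R}^n)\subseteq L^q(\mathbb{R}^n)$ then follows by density together with Lemma~\ref{L:norm_equivalence}.

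I expect the main obstacle to be the Hedberg optimization step: one must ensure that the split level $N$ balancing the two terms can be taken to be an integer (choose the nearest integer and absorb the bounded error into the constant), and that the large-cube tail genuinely converges, which is precisely where the hypothesis $s<\tfrac n2$ is indispensable and without which the whole argument collapses. As an alternative to carrying out the explicit Hedberg bound, one may instead directly invoke the known $L^p\to L^q$ estimates for the dyadic fractional integral $T_s$ from the literature cited in Section~\ref{S:defs_back}, in which case only the reduction via Lemma~\ref{L:dyadic_MS} remains to be verified.
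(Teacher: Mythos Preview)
Your proposal is correct and follows the same route as the paper: reduce via Lemma~\ref{L:dyadic_MS} to the $L^2\to L^q$ boundedness of the dyadic fractional integral $T_s$, with $\tfrac1q=\tfrac12-\tfrac sn$. The only difference is that the paper simply \emph{cites} this boundedness from the literature (Muckenhoupt--Wheeden and H\"anninen--Ruotsalainen--Shen), whereas you additionally supply a self-contained dyadic Hedberg argument for it; you yourself note this alternative at the end, and that shorter version is exactly what the paper does.
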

\begin{proof}
    Since $\frac{1}{q}=\frac{1}{2}-\frac{s}{n}$, Lemma~\ref{L:dyadic_MS} and the boundedness of the operator $T_s$, c.f.~\cite[Theorem 2.3]{HRS16} or~\cite[Theorem 4]{MW74},
    \[
    \|f\|_{L^q}=(2^s-1)\|T_s(D^{s}f)\|_{L^q}\apprle_{n,s} \|D^{s}f\|_{L^2(\mathbb{R}^n)}=\|f\|_{\dot{H}^s(\mathbb{R}^n)}.
    \]
\end{proof}
\subsection{Case \texorpdfstring{$s=n/2$}{[critical]}}
The classical fractional Sobolev embedding in $\mathbb{R}^n$ is known to fail at the borderline case $s=n/2$ and the proof of that fact readily transfers to our dyadic setting. Besides recording the embedding, in this section we also show that the correct space to embed is ${\rm BMO}_{\mathcal{D}}(\mathbb{R}^n)$, which are bounded mean oscillation functions adapted to the grid $\mathcal{D}$. The proof will use the characterization of the BMO-norm in the dyadic setting given by
\begin{equation}\label{E:BMO_char}
    \|f\|_{{\rm BMO}_{\mathcal{D}}}=\sup_{Q\in\mathcal{D}}\bigg(\frac{1}{|Q|}\sum_{\substack{Q\in\mathcal{D} \\ \varepsilon\not\equiv 1}}|(f,h_Q^\varepsilon)|^2 \bigg)^{1/2}.
\end{equation}

To ease the notation, in the following propositions we will omit the subscript $\mathcal{D}$ and write ${\rm BMO}(\mathbb{R}^n)$ for the dyadic ${\rm BMO}$.
\begin{proposition}
    It holds that $H^{\frac{n}{2}}(\mathbb{R}^n)\subseteq L^q(\mathbb{R}^n)$ for any $1<q<\infty$.
\end{proposition}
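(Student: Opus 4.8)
The plan is to prove the embedding $H^{n/2}_{\mathcal{D}}(\mathbb{R}^n) \subseteq L^q(\mathbb{R}^n)$ for every $q \in (1,\infty)$ by interpolating between two endpoints: the trivial inclusion $H^{n/2}_{\mathcal{D}}(\mathbb{R}^n) \subseteq L^2(\mathbb{R}^n)$ on the one hand, and on the other hand the critical embedding $H^{n/2}_{\mathcal{D}}(\mathbb{R}^n) \subseteq {\rm BMO}_{\mathcal{D}}(\mathbb{R}^n)$, followed by the fact that functions that are both in $L^2$ and in ${\rm BMO}_{\mathcal{D}}$ lie in every $L^q$ with $2 \le q < \infty$; the range $1 < q < 2$ is then harmless for functions supported effectively where $L^2$ control already dominates, or one simply notes that on sets of finite measure $L^q \hookrightarrow L^2$ fails in the wrong direction, so instead I would argue $L^2 \cap {\rm BMO}_{\mathcal{D}} \subseteq L^q$ only for $q \ge 2$ and handle $1 < q \le 2$ separately, observing that $f \in \mathscr{S}_{\mathcal{D}}(\mathbb{R}^n)$ is compactly supported, hence $f \in L^2$ already gives $f \in L^q$ for $1 < q \le 2$ on its (finite-measure) support via Hölder. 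Actually, since the statement is for $f \in \mathscr{S}_{\mathcal{D}}(\mathbb{R}^n)$, I can use compact support throughout and the whole range $1<q<\infty$ follows once $q \ge 2$ is done.

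First I would establish $\|f\|_{{\rm BMO}_{\mathcal{D}}} \apprle_{n} \|f\|_{\dot H^{n/2}_{\mathcal{D}}(\mathbb{R}^n)}$. Using the characterization~\eqref{E:BMO_char}, fix $Q \in \mathcal{D}$; the local square-sum $\frac{1}{|Q|}\sum_{P \subseteq Q, \varepsilon \not\equiv 1} |(f,h_P^\varepsilon)|^2$ must be bounded. Since $|P| \le |Q|$ for $P \subseteq Q$ and $s = n/2$, we have $|Q|^{-1} = |Q|^{-2s/n} \le |P|^{-2s/n}$, so $\frac{1}{|Q|}\sum_{P\subseteq Q}|(f,h_P^\varepsilon)|^2 \le \sum_{P \subseteq Q} |P|^{-2s/n}|(f,h_P^\varepsilon)|^2 \le \|f\|_{\dot H^{n/2}_{\mathcal{D}}}^2$. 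Taking the supremum over $Q$ gives the claim (with constant $1$, in fact). Combined with Lemma~\ref{L:norm_equivalence}, or simply with $\|f\|_{L^2} \le \|f\|_{H^{n/2}_{\mathcal{D}}}$, this yields $\|f\|_{L^2} + \|f\|_{{\rm BMO}_{\mathcal{D}}} \apprle_n \|f\|_{H^{n/2}_{\mathcal{D}}}$.

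Next I would invoke (the dyadic form of) the John–Nirenberg inequality: there are constants $c_1, c_2 > 0$ depending only on $n$ such that for any $f \in {\rm BMO}_{\mathcal{D}}(\mathbb{R}^n)$ and any $Q \in \mathcal{D}$,
\[
\big|\{x \in Q : |f(x) - \langle f\rangle_Q| > \lambda\}\big| \le c_1 |Q|\, e^{-c_2 \lambda / \|f\|_{{\rm BMO}_{\mathcal{D}}}}.
\]
From this exponential decay one gets, for every $p < \infty$, that $|f - \langle f\rangle_Q|$ has bounded $L^p$ average over $Q$: $\big(\frac{1}{|Q|}\int_Q |f - \langle f\rangle_Q|^p\big)^{1/p} \le C_{n,p}\|f\|_{{\rm BMO}_{\mathcal{D}}}$. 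Now I would combine this with the $L^2$ bound as follows: pick a large dyadic cube $Q_0 \supseteq \operatorname{supp} f$ (possible since $f \in \mathscr{S}_{\mathcal{D}}$ is compactly supported), then $\langle f\rangle_{Q_0}$ is small — in fact $|\langle f\rangle_{Q_0}| \le |Q_0|^{-1/2}\|f\|_{L^2}$ by Cauchy–Schwarz — so letting $|Q_0| \to \infty$ (taking increasingly large ambient cubes), the John–Nirenberg $L^q$ bound over $Q_0$ reads $\int_{Q_0}|f - \langle f\rangle_{Q_0}|^q \le C_{n,q}|Q_0|\,\|f\|_{{\rm BMO}_{\mathcal{D}}}^q$, which alone does not close; instead I would use the standard argument that $f \in L^2 \cap {\rm BMO}_{\mathcal{D}}$ implies $f \in L^q$ for $q \ge 2$ via the layer-cake / good-$\lambda$ estimate: $\|f\|_{L^q}^q = q\int_0^\infty \lambda^{q-1}|\{|f|>\lambda\}|\,d\lambda$, splitting at $\lambda = \|f\|_{L^2}$-ish scale and using the Chebyshev bound $|\{|f|>\lambda\}| \le \lambda^{-2}\|f\|_{L^2}^2$ for the low part and a Calderón–Zygmund decomposition together with John–Nirenberg for the high part to get exponential-in-$\lambda$ decay, giving $\|f\|_{L^q} \apprle_{n,q} \|f\|_{L^2} + \|f\|_{{\rm BMO}_{\mathcal{D}}} \apprle_{n,q} \|f\|_{H^{n/2}_{\mathcal{D}}(\mathbb{R}^n)}$. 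Finally, for $1 < q < 2$, since $f$ is supported in a set of finite measure, Hölder's inequality gives $\|f\|_{L^q} \le |\operatorname{supp} f|^{1/q - 1/2}\|f\|_{L^2} \apprle \|f\|_{H^{n/2}_{\mathcal{D}}}$, completing all cases.

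The main obstacle I anticipate is the passage from ${\rm BMO}_{\mathcal{D}}$ to global $L^q$ integrability: BMO functions need not be globally integrable, so one genuinely needs the $L^2$ hypothesis to pin down the function at infinity, and the cleanest way to do this is the good-$\lambda$ / Calderón–Zygmund argument above rather than a naive application of John–Nirenberg on a single cube. I would want to be careful to state the dyadic John–Nirenberg inequality (which holds on the standard dyadic grid with the usual proof via the dyadic maximal function and iterated stopping-time / Calderón–Zygmund decompositions, all of which are purely dyadic and so fit the paper's philosophy) and cite or sketch it, and to track that all constants depend only on $n$ and $q$, consistent with the $\apprle_{n,s}$ convention. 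An alternative, slightly slicker route that also stays dyadic: prove directly that for $f \in H^{n/2}_{\mathcal{D}}$ the dyadic sharp maximal function $f^\#_{\mathcal{D}}$ is in $L^\infty$ with $\|f^\#_{\mathcal{D}}\|_\infty \apprle_n \|f\|_{\dot H^{n/2}_{\mathcal{D}}}$ (which is exactly the BMO bound), then apply the Fefferman–Stein inequality $\|f\|_{L^q} \apprle_{n,q} \|f^\#_{\mathcal{D}}\|_{L^q}$ valid for $f \in L^{q_0}$ some $q_0 < \infty$ (here $q_0 = 2$), after first upgrading $f^\#_{\mathcal{D}} \in L^\infty \cap$ (compact support of $f$) to $f^\#_{\mathcal{D}} \in L^q$; but this again secretly uses the same machinery, so I would present the good-$\lambda$ version for self-containedness.
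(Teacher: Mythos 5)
Your argument is correct in substance but takes a genuinely different route from the paper's. The paper disposes of this case in one line: by Remark~\ref{R:monotonicity} one has $H^{n/2}_{\mathcal{D}}(\mathbb{R}^n)\subseteq H^{s_1}_{\mathcal{D}}(\mathbb{R}^n)$ for every $0<s_1<\frac{n}{2}$, so for a given $q\in[2,\infty)$ one sets $s_1=\frac{n}{2}-\frac{n}{q}$ and invokes the subcritical embedding $H^{s_1}_{\mathcal{D}}(\mathbb{R}^n)\subseteq L^{2n/(n-2s_1)}(\mathbb{R}^n)=L^q(\mathbb{R}^n)$ already established via the boundedness of $T_{s_1}$ (the authors phrase this as a verbatim citation of Leoni's Theorem~7.12 with $p=2$). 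You instead go through $H^{n/2}_{\mathcal{D}}(\mathbb{R}^n)\subseteq \mathrm{BMO}_{\mathcal{D}}(\mathbb{R}^n)$ --- which is literally the paper's \emph{next} proposition, proved exactly as in your second paragraph from~\eqref{E:BMO_char} --- followed by the dyadic John--Nirenberg inequality and the interpolation $L^2\cap\mathrm{BMO}_{\mathcal{D}}\subseteq L^q$ for $q\geq 2$. That is a valid, purely dyadic alternative; what it costs is the John--Nirenberg/Calder\'on--Zygmund stopping-time machinery, which you only sketch in the good-$\lambda$ step and which the paper never needs to develop, and what it buys is independence from the subcritical range (no $T_s$ bounds) together with the sharp $O(q)$ growth of the embedding constant as $q\to\infty$. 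One caution on the range $1<q<2$: your H\"older bound $\|f\|_{L^q}\leq |\operatorname{supp} f|^{\frac{1}{q}-\frac{1}{2}}\|f\|_{L^2}$ carries a constant depending on the support, so it yields the inclusion only for the compactly supported class $\mathscr{S}_{\mathcal{D}}(\mathbb{R}^n)$ and not a uniform norm bound that would survive a density argument (indeed, $H^{n/2}(\mathbb{R}^n)\not\subseteq L^q(\mathbb{R}^n)$ for $q<2$, as a sequence of unit-cube Haar bumps marching to infinity shows); this limitation is shared by the paper's own statement of that range, so it is not a defect of your argument relative to theirs, but it deserves an explicit remark.
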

\begin{proof}
    After Remark~\ref{R:monotonicity}, the proof can be taken verbatim from~\cite[Theorem 7.12]{Leo23}, setting $p=2$.
\end{proof}

\begin{proposition}
    It holds that $H^{\frac{n}{2}}(\mathbb{R}^n)\subseteq {\rm BMO}(\mathbb{R}^n)$.
\end{proposition}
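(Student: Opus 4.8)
The plan is to use the dyadic BMO characterization in~\eqref{E:BMO_char} directly, together with the seminorm formula~\eqref{E:def_Hs_seminorm} with $s=n/2$. Fix $f\in\mathscr{S}_{\mathcal{D}}(\mathbb{R}^n)\cap H^{n/2}(\mathbb{R}^n)$ and an arbitrary cube $Q\in\mathcal{D}$; the goal is to bound $\frac{1}{|Q|}\sum_{\varepsilon\not\equiv 1}|(f,h_Q^\varepsilon)|^2$ —- and, reading~\eqref{E:BMO_char} literally, the full sum over subcubes —- by $\|f\|_{\dot H^{n/2}}^2$ uniformly in $Q$. The point is that $|P|^{-n/(n\cdot 2)}\cdot|P|^{1/2}\cdot$ nothing: more precisely, for $P\subseteq Q$ one has $|P|\le|Q|$, so $\frac{1}{|Q|}|(f,h_P^\varepsilon)|^2 \le \frac{1}{|P|}|(f,h_P^\varepsilon)|^2 = |P|^{-2\cdot(n/2)/n}|(f,h_P^\varepsilon)|^2$, since $2(n/2)/n = 1$. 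Summing over all $P\subseteq Q$ and all admissible $\varepsilon$ therefore gives
\begin{equation*}
    \frac{1}{|Q|}\sum_{\substack{P\subseteq Q\\ \varepsilon\not\equiv 1}}|(f,h_P^\varepsilon)|^2 \le \sum_{\substack{P\subseteq Q\\ \varepsilon\not\equiv 1}}|P|^{-\frac{2}{n}\cdot\frac{n}{2}}|(f,h_P^\varepsilon)|^2 \le \sum_{\substack{P\in\mathcal{D}\\ \varepsilon\not\equiv 1}}|P|^{-\frac{n}{n}}|(f,h_P^\varepsilon)|^2 = \|f\|_{\dot H^{n/2}(\mathbb{R}^n)}^2 .
\end{equation*}
Taking the supremum over $Q\in\mathcal{D}$ yields $\|f\|_{{\rm BMO}(\mathbb{R}^n)}\le\|f\|_{\dot H^{n/2}(\mathbb{R}^n)}\le\|f\|_{H^{n/2}(\mathbb{R}^n)}$, which is the desired embedding.

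The main (really the only) subtlety is the exact form of the dyadic BMO seminorm being invoked: as written in~\eqref{E:BMO_char} the inner sum has a typo (the index $Q$ appears both as the outer supremum variable and the inner summation variable), and the intended object is $\frac{1}{|Q|}\sum_{P\in\mathcal{D}(Q),\,\varepsilon\not\equiv 1}|(f,h_P^\varepsilon)|^2$, i.e.\ the normalized sum of Haar coefficients over all dyadic subcubes of $Q$ — this is the standard dyadic square-function/Carleson characterization of ${\rm BMO}_{\mathcal{D}}$. With that reading the computation above is complete and uses only that $2s/n=1$ when $s=n/2$, plus monotonicity of $t\mapsto t^{-1}$; I would state the characterization in the precise form needed before the estimate, so that the bound $|P|^{-2s/n}=|P|^{-1}$ slots in cleanly. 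No further machinery (no Lemma~\ref{L:dyadic_MS}, no boundedness of $T_s$) is required here — this case is genuinely soft.

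One should also observe that the embedding is sharp in the sense that ${\rm BMO}_{\mathcal{D}}$ cannot be replaced by $L^\infty$, which is why the previous proposition only gives $L^q$ for finite $q$; it would be natural to remark that the dyadic counterexample realizing this (e.g.\ a lacunary-type Haar series $\sum_{k}|Q_k|^{n/(n\cdot 2)}\cdot 2^{-?}h_{Q_k}^{\varepsilon}$ with $\sum|(f,h_{Q_k}^\varepsilon)|^2|Q_k|^{-1}<\infty$ but $f\notin L^\infty$) parallels the classical one, though constructing it explicitly is not needed for the proposition as stated.
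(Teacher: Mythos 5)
Your proposal is correct and follows essentially the same route as the paper: both reduce the claim to the pointwise observation that $|P|\le|Q|$ for $P\subseteq Q$, so each term $\frac{1}{|Q|}|(f,h_P^\varepsilon)|^2$ is dominated by $|P|^{-1}|(f,h_P^\varepsilon)|^2=|P|^{-2s/n}|(f,h_P^\varepsilon)|^2$ at $s=n/2$, and then sum and take the supremum over $Q$. Your reading of the typo in the BMO characterization (the inner sum should run over subcubes $P\subseteq Q$) matches the intended statement and the paper's own use of it.
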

\begin{proof}
    In view of Lemma~\ref{L:norm_equivalence} and~\eqref{E:BMO_char}, it suffices to prove that for any $f\in\mathscr{S}$ and $Q\in\mathcal{D}$ with $|Q|<1$,
    \[
    \frac{1}{|Q|}\sum_{\substack{P\subseteq Q\\ \varepsilon\not\equiv 1}}|(f,h_P^\varepsilon)|^2\leq \sum_{\substack{P\in\mathcal{D}\\ \varepsilon\not\equiv 1}}|P|^{-1}|(f,h_P^\varepsilon)|^2.
    \]
    However, the latter is clear because
    \[
    \frac{1}{|P|}\sum_{\substack{P\subseteq Q\\ \varepsilon\not\equiv 1}}|(f,h_P^\varepsilon)|^2=\sum_{\substack{P\subseteq Q\\ \varepsilon\not\equiv 1}}\frac{|P|}{|Q|}|P|^{-1}|(f,h_P^\varepsilon)|^2\leq \sum_{\substack{P\subseteq Q\\ \varepsilon\not\equiv 1}}|P|^{-1}|(f,h_P)|^2\leq \sum_{\substack{P\in\mathcal{D}\\ \varepsilon\not\equiv 1}}|P|^{-1}|(f,h_P^\varepsilon)|^2.
    \]
\end{proof}
\subsection{Case \texorpdfstring{$s>n/2$}{[large]}}
The proof of the dyadic Morrey-type embedding in the higher-regularity range follows the ideas of~\cite{ARF24} and is based on the property of the dyadic averages $\langle f\rangle_Q$ proved in the next lemma. For the classical counterpart see~\cite[Theorem 7.23]{Leo23}.

\begin{lemma}\label{L:telescopic_averages}
Let $f\in L^2(\mathbb{R}^n)$, $Q\in\mathcal{D}$ and $k\geq 0$. Then,
    \begin{equation*}
    \langle f\rangle_{Q}-\langle f\rangle_{Q^{(k)}}=\sum_{\substack{Q\subsetneq P\subseteq Q^{(k)} \\ \varepsilon\not\equiv 1}}(f,h^\varepsilon_{P})h_{P}^\varepsilon(Q),
    \end{equation*}
    where $Q^{(k)}$ denotes the $k$th dyadic ancestor of $Q$.
\end{lemma}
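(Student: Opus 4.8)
The plan is to expand $\langle f\rangle_Q$ and $\langle f\rangle_{Q_{(k)}}$ using the averaging characterization~\eqref{E:avg_char} and subtract. Recall that~\eqref{E:avg_char} gives $\langle f\rangle_Q = \sum_{P\supsetneq Q}(f,h_P^\varepsilon)h_P^\varepsilon(Q)$, where the sum runs over all dyadic cubes $P$ strictly containing $Q$ (and over $\varepsilon\not\equiv 1$). Similarly $\langle f\rangle_{Q_{(k)}} = \sum_{P\supsetneq Q_{(k)}}(f,h_P^\varepsilon)h_P^\varepsilon(Q_{(k)})$. Since $Q\subseteq Q_{(k)}$, every $P$ that strictly contains $Q_{(k)}$ also strictly contains $Q$, so the set of cubes appearing in the second sum is a subset of those in the first, and the difference of the two sums should collapse to the cubes $P$ with $Q\subsetneq P\subseteq Q_{(k)}$.

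The one point that needs care is matching the coefficients: in the first sum the relevant Haar value is $h_P^\varepsilon(Q)$, while in the second it is $h_P^\varepsilon(Q_{(k)})$. For $P\supsetneq Q_{(k)}$ we have $Q\subseteq Q_{(k)}\subsetneq P$, and since $h_P^\varepsilon$ is constant on any dyadic subcube strictly contained in $P$ (as noted in the excerpt, with constant written $h_P^\varepsilon(\cdot)$), we get $h_P^\varepsilon(Q) = h_P^\varepsilon(Q_{(k)})$. Hence the contributions from all $P\supsetneq Q_{(k)}$ cancel exactly in the subtraction $\langle f\rangle_Q - \langle f\rangle_{Q_{(k)}}$, leaving precisely
\[
\langle f\rangle_Q - \langle f\rangle_{Q_{(k)}} = \sum_{\substack{Q\subsetneq P\subseteq Q_{(k)}\\ \varepsilon\not\equiv 1}}(f,h_P^\varepsilon)h_P^\varepsilon(Q),
\]
which is the claimed identity. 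The case $k=0$ is trivial since then $Q_{(k)}=Q$, both sides vanish, and the constancy remark is not even needed.

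First I would write down the two Haar expansions of the averages via~\eqref{E:avg_char}; then I would split the sum for $\langle f\rangle_Q$ into the part over $Q\subsetneq P\subseteq Q_{(k)}$ and the part over $P\supsetneq Q_{(k)}$; then I would invoke the constancy of $h_P^\varepsilon$ on subcubes to replace $h_P^\varepsilon(Q)$ by $h_P^\varepsilon(Q_{(k)})$ in the second part; and finally I would recognize that second part as exactly $\langle f\rangle_{Q_{(k)}}$ and cancel. A subtle bookkeeping issue to get right is that all sums are (absolutely) convergent for $f\in L^2(\mathbb{R}^n)$ — indeed for $f\in\mathscr{S}_{\mathcal{D}}(\mathbb{R}^n)$ they are finite — so the rearrangement and cancellation are legitimate; since the stated results are for $\mathscr{S}_{\mathcal{D}}(\mathbb{R}^n)$ this is immediate, but I would remark on it for the general $L^2$ statement. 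I do not anticipate a genuine obstacle here: the only place where something could go wrong is conflating "$\supsetneq$" with "$\supseteq$" or mishandling the endpoint cube $Q_{(k)}$ itself, which does appear in the right-hand sum (as $P=Q_{(k)}$) but not among the cubes strictly containing $Q_{(k)}$, so it correctly survives the cancellation.
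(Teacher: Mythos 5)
Your proposal is correct and follows essentially the same route as the paper: expand $\langle f\rangle_Q$ via~\eqref{E:avg_char}, split the sum over $P\supsetneq Q$ into $Q\subsetneq P\subseteq Q_{(k)}$ and $P\supsetneq Q_{(k)}$, and use the constancy $h_P^\varepsilon(Q)=h_P^\varepsilon(Q_{(k)})$ for $P\supsetneq Q_{(k)}$ to identify the tail with $\langle f\rangle_{Q_{(k)}}$. The extra remarks on convergence and the $k=0$ case are fine but not needed beyond what the paper already assumes.
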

\begin{proof}
    Let $k\geq 0$. 
    Applying the Haar expansion of the average in~\eqref{E:avg_char} and noting that $Q\subseteq Q^{(k)}\subsetneq P$ implies $h_P^\varepsilon(Q)=h_P^\varepsilon(Q^{(k)})$, we obtain
    \begin{align*}
    \langle f\rangle_Q&=\sum_{\substack{P\supsetneq Q \\ \varepsilon\not\equiv 1}}(f,h_P^\varepsilon)h_P^\varepsilon(Q)
    =\sum_{\substack{Q^{(k)}\supseteq P\supsetneq Q\\ \varepsilon\not\equiv 1}}(f,h_P^\varepsilon)h_P^\varepsilon(Q)+
    \sum_{\substack{P\supsetneq Q^{(k)}\\ \varepsilon\not\equiv 1}}(f,h_P^\varepsilon)h_P^\varepsilon(Q)\\
    &=\sum_{\substack{Q\subsetneq P\subseteq Q^{(k)} \\ \varepsilon\not\equiv 1}}(f,h^\varepsilon_{P})h_{P}^\varepsilon(Q)+\langle f\rangle_{Q^{(k)}}.
    \end{align*}
\end{proof}
\begin{proposition}\label{P:Morrey}
    Let $\frac{n}{2}<s<n$. Then, $H^s(\mathbb{R}^n)\subseteq L^\infty(\mathbb{R}^n)$ and in particular
    \begin{equation}\label{E:embedding_high}
        \|f\|_{L^\infty(\mathbb{R})}\apprle_{n,s} \|f\|_{\dot{H}^s(\mathbb{R}^n)}
    \end{equation}
    for any $f\in \mathscr{S}(\mathbb{R}^n)$.
\end{proposition}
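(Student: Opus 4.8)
The plan is to bound $\|f\|_{L^\infty}$ pointwise by estimating $f(x)$ at an arbitrary point $x\in\mathbb{R}^n$. Fix $x$ and let $Q\ni x$ be a dyadic cube; the idea is to write $f(x)$ as a limit of averages $\langle f\rangle_Q$ as $\ell(Q)\to 0$ (valid for $f\in\mathscr{S}_{\mathcal{D}}(\mathbb{R}^n)$, where the Haar expansion is finite so the pointwise convergence is immediate), and then telescope those averages up to a large ancestor using Lemma~\ref{L:telescopic_averages}. Concretely, for $x\in Q$ with $|Q|<1$ and $Q_{(k)}$ its $k$th ancestor, Lemma~\ref{L:telescopic_averages} gives
\begin{equation*}
    \langle f\rangle_Q-\langle f\rangle_{Q_{(k)}}=\sum_{\substack{Q\subsetneq P\subseteq Q_{(k)}\\ \varepsilon\not\equiv 1}}(f,h_P^\varepsilon)h_P^\varepsilon(Q),
\end{equation*}
and since $|h_P^\varepsilon(Q)|=|P|^{-1/2}$, an application of Cauchy--Schwarz yields
\begin{equation*}
    |\langle f\rangle_Q-\langle f\rangle_{Q_{(k)}}|\le\Big(\sum_{\substack{Q\subsetneq P\subseteq Q_{(k)}\\ \varepsilon\not\equiv 1}}|P|^{-\frac{2s}{n}}|(f,h_P^\varepsilon)|^2\Big)^{1/2}\Big(\sum_{\substack{Q\subsetneq P\subseteq Q_{(k)}\\ \varepsilon\not\equiv 1}}|P|^{\frac{2s}{n}-1}\Big)^{1/2}.
\end{equation*}
The first factor is controlled by $\|f\|_{\dot H^s(\mathbb{R}^n)}$ (it is a subsum of the full seminorm sum), and the crucial point is that the second factor is \emph{summable} precisely because $2s/n>1$: there are $(2^n)^j$ cubes $P$ with $\ell(P)=2^j\ell(Q)$ on the chain from $Q$ up to $Q_{(k)}$ (in fact exactly one on each level for a fixed ancestor chain — I should be careful here, see below), so the geometric sum over scales converges, giving a bound of the form $C_{n,s}\,\ell(Q)^{s-n/2}\|f\|_{\dot H^s}\to 0$ as $\ell(Q)\to 0$.

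\textbf{Assembling the pointwise bound.} Taking $k\to\infty$ (equivalently shrinking $Q$ around $x$ and using a second ancestor chain, or fixing one ancestor of unit size), the telescoping identity shows $f(x)=\lim_{\ell(Q)\to 0}\langle f\rangle_Q$ differs from $\langle f\rangle_{Q_0}$ — where $Q_0$ is the unique ancestor of $x$ with $\ell(Q_0)=1$ — by at most a constant multiple of $\|f\|_{\dot H^s(\mathbb{R}^n)}$. It then remains to bound $|\langle f\rangle_{Q_0}|$: by Cauchy--Schwarz, $|\langle f\rangle_{Q_0}|\le|Q_0|^{-1/2}\|f\mathbf{1}_{Q_0}\|_{L^2}\le\|f\|_{L^2(\mathbb{R}^n)}$ since $|Q_0|=1$. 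Combining, $|f(x)|\apprle_{n,s}\|f\|_{\dot H^s(\mathbb{R}^n)}+\|f\|_{L^2(\mathbb{R}^n)}\apprle_{n,s}\|f\|_{H^s(\mathbb{R}^n)}$, uniformly in $x$, which is~\eqref{E:embedding_high}.

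\textbf{Main obstacle.} The delicate point is the counting in the geometric series: along the telescoping chain, for each scale $2^{-j}\le 1$ there is exactly \emph{one} cube $P$ (the $j$th ancestor of $Q$), so $\sum_P|P|^{2s/n-1}=\sum_j(2^{-j})^{2s/n-1}$ over the relevant range of $j$, which converges iff $2s/n-1>0$, i.e. $s>n/2$ — exactly the hypothesis. One must be careful to organize the sum over dyadic $P$ with $Q\subsetneq P\subseteq Q_{(k)}$ correctly (it is a single chain, not a tree), and to justify the interchange of limit and the finite Haar sum for $f\in\mathscr{S}_{\mathcal{D}}(\mathbb{R}^n)$, where this is elementary. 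A mild subtlety is that $\langle f\rangle_Q$ need not converge to $f(x)$ at every $x$ for general $f\in L^2$, but since we only claim the estimate for $f\in\mathscr{S}_{\mathcal{D}}(\mathbb{R}^n)$ the expansion is finite and the limit is exactly $f(x)$ away from the (measure-zero) dyadic grid, which suffices for the $L^\infty$ norm.
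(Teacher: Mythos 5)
Your proof is correct and follows essentially the same route as the paper's: the pointwise value is recovered as a limit of dyadic averages at a Lebesgue point, the difference to the unit-scale ancestor's average is telescoped via Lemma~\ref{L:telescopic_averages} and controlled by Cauchy--Schwarz with the weight split $|P|^{-1/2}=|P|^{-s/n}\,|P|^{s/n-1/2}$, the resulting geometric series over the (single) ancestor chain converging precisely because $s>n/2$, and the unit-cube average is bounded by $\|f\|_{L^2}$. The only cosmetic difference is that the paper carries the sum over the $2^n-1$ admissible indices $\varepsilon$ explicitly, which produces the constant recorded in~\eqref{E:Morrey_const}.
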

The exact expression of the constant in the bound is given by~\eqref{E:Morrey_const}.
\begin{proof}
    Let $f\in \mathscr{S}(\mathbb{R}^n)$ and $x\in\mathbb{R}^n$ be a Lebesgue point, i.e.
    \begin{equation}\label{E:Lebesgue_point}
        f(x)=\lim_{k\to\infty}\langle f\rangle_{Q_{x,k}},
    \end{equation}
    where $Q_{x,k}\in \mathcal{D}$ denotes the dyadic cube of side length $2^{-k}$ that contains $x$. Noticing that $|Q_{x,0}|=1$, the triangle inequality and Cauchy-Schwarz yield
    \begin{align*}
        |f(x)|&\leq |f(x)-\langle f\rangle_{Q_{x,0}}|+|\langle f\rangle_{Q_{x,0}}|\\
        &\leq |f(x)-\langle f\rangle_{Q_{x,0}}|+\bigg(\int_{Q_{x,0}}|f|^2dx\bigg)^{1/2}|Q_{x,0}|^{1/2}\\
        &\leq |f(x)-\langle f\rangle_{Q_{x,0}}|+\|f\|_{L^2}.
    \end{align*}
    Therefore, it suffices to show that
    \begin{equation}\label{E:Morrey_02}
        |f(x)-\langle f\rangle_{Q_{x,0}}|\apprle_{n,s}\|f\|_{\dot{H}^s(\mathbb{R}^n)}.
    \end{equation}
    In view of~\eqref{E:Lebesgue_point}, writing the difference on the left-hand side of~\eqref{E:Morrey_02} as a telescopic sum and applying Lemma~\ref{L:telescopic_averages} we get
    \begin{align*}
    |f(x)-\langle f\rangle_{Q_{x,0}}|&\leq \sum_{k=0}^\infty |\langle f\rangle_{Q_{x,k}}-\langle f\rangle_{Q_{x,k+1}}|\\
    &\leq \sum_{k=0}^\infty\; \sum_{\varepsilon\not\equiv 1}|(f,h_{Q_{x,k}}^\varepsilon)|\,|h_{Q_{x,k}}^\varepsilon(Q_{x,k+1})|\\
    &=\sum_{k=0}^\infty\; \sum_{\varepsilon\not\equiv 1} |(f,h_{Q_{x,k}}^\varepsilon)|\,|Q_{x,k}|^{-1/2},
    \end{align*}
    where in the last equality we used the fact that $Q_{x,k+1}\subsetneq Q_{x,k}$ and thus $h_{Q_{x,k}}$ is constant in $Q_{x,k+1}$. Rewriting the above and applying Cauchy-Schwarz we obtain
    \begin{align}
    |f(x)-\langle f\rangle_{Q_{x,0}}|&\leq\sum_{k=0}^\infty \sum_{\varepsilon\not\equiv 1}|(f,h_{Q_{x,k}}^\varepsilon)|\,|Q_{x,k}|^{-\frac{s}{n}}|Q_{x,k}|^{\frac{s}{n}-\frac{1}{2}}\notag\\
    &\leq \bigg(\sum_{k=0}^\infty \sum_{\varepsilon\not\equiv 1}|(f,h_{Q_{x,k}}^\varepsilon)|^2|Q_{x,k}|^{\frac{2s}{n}}\bigg)^{1/2}\bigg(\sum_{k=0}^\infty \sum_{\varepsilon\not\equiv 1}|Q_{x,k}|^{-\frac{2s}{n}-1}\bigg)^{1/2}\notag\\
    &= \bigg(\sum_{k=0}^\infty \sum_{\varepsilon\not\equiv 1}|(f,h_{Q_{x,k}}^\varepsilon)|^2|Q_{x,k}|^{-\frac{2s}{n}}\bigg)^{1/2}\bigg((2^n-1)\sum_{k=0}^\infty(2^{-nk})^{\frac{2s}{n}-1}\bigg)^{1/2}\notag\\
    &\leq \|f\|_{\dot{H}^s}\bigg(\frac{2^n-1}{1-2^{n-2s}}\bigg)^{1/2}\label{E:Morrey_const}
    \end{align}
    which gives~\eqref{E:Morrey_02}.
\end{proof}
\section{Algebra property} \label{S:algebra_prop}
We now turn to the second main question of this paper, that is establishing the validity or failure of the algebra property of the space $H^s(\mathbb{R}^n)$. The question reduces to finding out whether $f\in H^s(\mathbb{R}^n)$ implies $f^2\in H^s(\mathbb{R}^n)$ since if the latter holds and $f,g\in H^s(\mathbb{R}^n)$, then $f+g\in H^s(\mathbb{R}^n)$ and hence $(f+g)^2\in H^s(\mathbb{R}^n)$, whence $fg\in H^s(\mathbb{R}^n)$.

\medskip

The algebra property is tightly connected to the fractional Sobolev embedding, which is instrumental in proving the validity of that property in the higher-regularity range $n/2<s<n$. In the lower-regularity range $0<s\leq n/2$ the space $H^s(\mathbb{R}^n)$ fails to be an algebra, and we provide explicit examples for that failure.

\medskip

Because of the special role played by the function $f^2$ in this section, we record first the following useful expression of its Haar expansion. 
\begin{lemma}\label{L:Haar_coeff_sq}
    For any $f\in\mathscr{S}(\mathbb{R})$ and any Haar function $h_P^\eta$, $P\in\mathcal{D}$, $\eta\not\equiv 1$,
    \begin{equation}\label{E:Haar_coeff_sq}
        (f^2,h_P^\eta) 
        =\sum_{Q\subseteq P}\sum_{\varepsilon\not\equiv 1}(f,h_Q^\varepsilon)^2h_P^\eta(Q)+\sum_{\substack{\varepsilon\not\equiv 1,\widetilde{\varepsilon}\not\equiv 1\\ \varepsilon+\widetilde{\varepsilon}=\eta}}(f,h_Q^\varepsilon)(f,h_Q^{\widetilde{\varepsilon}})+2(f,h_P^\eta)\langle f\rangle_P.
    \end{equation}
\end{lemma}
\begin{proof}
    Writing $f$ in its Haar expansion and applying Lemma~\ref{L:product_Haar},
    \begin{equation}\label{E:f_sq_Haar}
    \begin{aligned}
        f^2&=\bigg(\sum_{Q\in\mathcal{D}}\sum_{\varepsilon\not\equiv 1}(f,h_Q^\varepsilon)h_Q^\varepsilon\bigg)^2\\
        &=\sum_{Q\in\mathcal{D}}\sum_{\widetilde{Q}\in\mathcal{D}}\sum_{\varepsilon\not\equiv 1}\sum_{\widetilde{\varepsilon}\not\equiv 1}(f,h_Q^\varepsilon)(f,h_{\widetilde{Q}}^{\widetilde{\varepsilon}})h_Q^\varepsilon h_{\widetilde{Q}}^{\widetilde{\varepsilon}}\\
        &=\sum_{Q\in\mathcal{D}}\sum_{\varepsilon\not\equiv 1}\sum_{\widetilde{\varepsilon}\not\equiv 1}(f,h_Q^\varepsilon)(f,h_{Q}^{\widetilde{\varepsilon}})h_Q^{\varepsilon+\widetilde{\varepsilon}}
        +2\sum_{Q\in\mathcal{D}}\sum_{\widetilde{Q}\subsetneq Q}\sum_{\varepsilon\not\equiv 1}\sum_{\widetilde{\varepsilon}\not\equiv 1}(f,h_Q^\varepsilon)(f,h_{\widetilde{Q}}^{\widetilde{\varepsilon}})h_Q^\varepsilon h_{\widetilde{Q}}^{\widetilde{\varepsilon}}\\
        &:=S_1+2S_2.
    \end{aligned}
    \end{equation}
    To compute the Haar coefficients of the first term in~\eqref{E:f_sq_Haar}, note that, for any admissible $\eta$ and $P\in\mathcal{D}$,
    \begin{equation*}
        (\mathbf{1}_{Q},h_P^\eta)=
        \begin{cases}
            0&\text{if }P\subsetneq Q,\\
            h_P^\eta(Q)|Q|&\text{if } P\supseteq Q,
        \end{cases}
        \quad\text{and}\quad
        (h_Q^{\varepsilon+\tilde{\varepsilon}},h_P^\eta)=
        \begin{cases}
            1&\text{if }\varepsilon+\tilde{\varepsilon}=\eta\text{ and }P=Q,\\
            0&\text{otherwise.}
        \end{cases}
    \end{equation*}
    Thus, it follows that 
    \begin{equation*}
        (S_1,h_P^\eta)=\sum_{Q\subseteq P}\sum_{\varepsilon\not\equiv 1}(f,h_Q^\varepsilon)^2h_P^\eta(Q)+\sum_{\varepsilon\not\equiv 1}\sum_{\substack{\varepsilon\not\equiv 1\\ \varepsilon+\tilde{\varepsilon}=\eta}}(f,h_Q^\varepsilon)(f,h_q^{\tilde{\varepsilon}}).
    \end{equation*}
    The Haar coefficients of the second term in~\eqref{E:f_sq_Haar} turn out to be expressible in terms of the average of the function after combining Lemma~\ref{L:product_Haar} with the observation
    \begin{equation*}
        (h_Q^\varepsilon,h_P^\eta)=
        \begin{cases}
            1&\text{if }\varepsilon=\eta\text{ and } Q=P,\\
            0&\text{otherwise},
        \end{cases}
    \end{equation*}
    since then it follows that
    \begin{equation*}
        \begin{aligned}
        (S_2,h_P^\eta)&=\sum_{Q\supsetneq P}\sum_{\varepsilon\not\equiv 1}(f,h_Q^\eta)(f,h_P^\eta)h_Q^\varepsilon(P)\\
        &=(f,h_P^\eta)\sum_{Q\supsetneq P}\sum_{\varepsilon\not\equiv 1}(f,h_Q^\eta)h_Q^\varepsilon(P)=(f,h_P^\eta)\langle f\rangle_P.
        \end{aligned}
    \end{equation*}
\end{proof}

\subsection{Case \texorpdfstring{$0<s<n/2$}{[small]}} \label{Counterexample_case<n/2}
To establish the failure of the algebra property in this range we construct a function $f\in H^s(\mathbb{R}^n)$ whose square does not belong to the space. 

\medskip

To do so, let $Q_0:=(-1,0)^n$ and for any $k\geq 1$, let $Q_k:=(-\frac{1}{2^k},0)^n$. In this way, the dyadic child of $Q_{k-1}$ lies in its upper-right corner, see Figure~\ref{F:left_dyadic}. 
Starting from $Q_{0}$ and going up in generations, $Q_{k}$ with $k<0$ denotes the dyadic ancestor of $Q_{k-1}$. In this way, $Q_{k}$ is contained in the far right corner of $Q_{m}$ for any $k\geq 0$ and $m<0$ and hence $h_{Q_{m}}^\epsilon(Q_{k})=1/\sqrt{|Q_{m}|}=2^{mn/2}$ for all $\varepsilon$.

\begin{figure}[H]
    \centering
    \begin{tikzpicture}[scale=8]
        \draw[->] (-1.1, 0) -- (0.1, 0) node[right] {$x$};
        \draw[->] (0, -1.1) -- (0, 0.1) node[above] {$y$};

        \foreach \n in {1,2,4,16} {
            \draw[] (-1/\n, -1/\n) rectangle (0, 0); 
        }

        \filldraw[black] (-1, 0) circle (0.002);
        \node[above left] at (-1,0) {$-1$};

        \filldraw[black] (0, -1) circle (0.002);
        \node[below right] at (0,-1) {$-1$};

        \foreach \n in {2,4,16} {
            \pgfmathsetmacro\x{-1/\n}
            \pgfmathsetmacro\labeln{int(\n)}
            
            \filldraw[black] (\x, 0) circle (0.002);
            \filldraw[black] (0, \x) circle (0.002);
            
            \node[above left] at (\x, 0) {$-\frac{1}{\labeln}$};

            \node[below right] at (0, \x) {$-\frac{1}{\labeln}$};
        }

        \filldraw[black] (0, 0) circle (0.003);
        \node[above right] at (0,0) {$(0,0)$};

        \node[right] at (-3/4,-3/4) {$Q_{0}$};
        \node[right] at (-3/8,-3/8) {$Q_{1}$};
        \node[right] at (-5/32,-5/32) {$Q_{2}$};
    \end{tikzpicture}
    \caption{Dyadic cubes involved in the counterexample~\eqref{E:counterex_low_reg} ($n=2$).}
    \label{F:left_dyadic}
\end{figure}
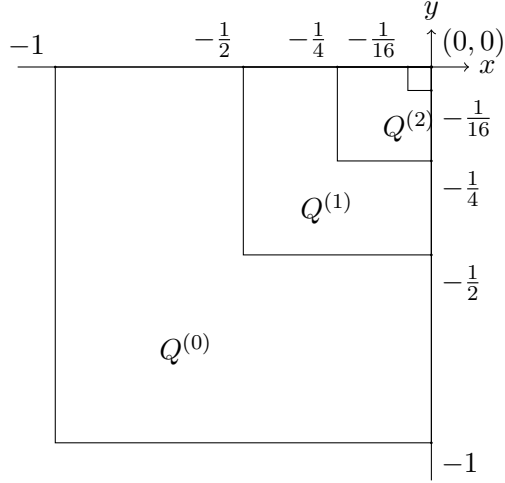
\begin{proposition}\label{P:couterex_low_reg}
    The space $H^s(\mathbb{R}^n)$ is not an algebra when $0<s<n/2$. In particular, let $\varepsilon_0 = (0,1, \ldots, 1)$. The function $f\in\mathscr{S}(\mathbb{R}^n)$ given by
    \begin{equation}\label{E:counterex_low_reg}
        f:=\sum_{k=0}^\infty |Q_{k}|^\alpha \, h^{\varepsilon_0}_{Q_{k}},
    \end{equation}
    where $\frac{s}{n}<\alpha<\frac{s}{2n}+\frac{1}{4}$, belongs to $H^s(\mathbb{R}^n)$ while $f^2\notin H^s(\mathbb{R}^n)$.
\end{proposition}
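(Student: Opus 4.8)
The plan is to verify two things separately: (a) $f\in H^s_{\mathcal D}(\mathbb R^n)$, and (b) $f^2\notin H^s_{\mathcal D}(\mathbb R^n)$, exploiting the explicit telescoping structure of the nested cubes $Q^{(k)}$.

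**Step 1: $f\in H^s_{\mathcal D}(\mathbb R^n)$.** Since the Haar coefficients of $f$ in~\eqref{E:counterex_low_reg} are $(f,h^{\varepsilon_0}_{Q^{(k)}})=|Q^{(k)}|^\alpha=2^{-kn\alpha}$ and all other coefficients vanish, I would just plug into the seminorm~\eqref{E:def_Hs_seminorm}:
\[
\|f\|_{\dot H^s_{\mathcal D}}^2=\sum_{k=0}^\infty |Q^{(k)}|^{-\frac{2s}{n}}|Q^{(k)}|^{2\alpha}=\sum_{k=0}^\infty 2^{kn(\frac{2s}{n}-2\alpha)}=\sum_{k=0}^\infty 2^{-2nk(\alpha-\frac{s}{n})},
\]
which converges because $\alpha>\frac{s}{n}$; the $L^2$-norm is $\sum_k 2^{-2kn\alpha}<\infty$ similarly. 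So $f\in H^s_{\mathcal D}(\mathbb R^n)$ (and $f\in\mathscr S_{\mathcal D}$ only in the formal sense of an $\ell^2$-summable Haar series — one should remark this, or truncate and pass to the limit).

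**Step 2: The Haar coefficients of $f^2$ along the tower.** This is where~\eqref{E:Haar_coeff_sq} does the work. Fix $P=Q^{(m)}$ for $m\ge 1$ and $\eta=\varepsilon_0$. In the first sum of~\eqref{E:Haar_coeff_sq}, only cubes $Q=Q^{(j)}$ with $j\ge m$ contribute, each with a single nonzero coefficient $(f,h^{\varepsilon_0}_{Q^{(j)}})^2=2^{-2jn\alpha}$ and weight $h^{\varepsilon_0}_{Q^{(m)}}(Q^{(j)})=|Q^{(m)}|^{-1/2}=2^{mn/2}$. The second (cross) sum vanishes: since only one $\varepsilon$ (namely $\varepsilon_0$) appears and $\varepsilon_0+\varepsilon_0=(1,\dots,1)\equiv 1\ne\eta$, there is no admissible pair $\varepsilon+\widetilde\varepsilon=\eta$ with both $\not\equiv 1$. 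The third term is $2(f,h^{\varepsilon_0}_{Q^{(m)}})\langle f\rangle_{Q^{(m)}}$; using~\eqref{E:avg_char} and that $h^{\varepsilon}_{Q^{(i)}}(Q^{(m)})=2^{in/2}$ for all $i<m$ (the "far right corner" observation), $\langle f\rangle_{Q^{(m)}}=\sum_{i=-\infty}^{m-1}(2^n-1)\,2^{-in\alpha}2^{in/2}$ — but here one must be careful: $f$ as defined only has components at $Q^{(k)}$, $k\ge 0$, so actually $\langle f\rangle_{Q^{(m)}}=\sum_{i=0}^{m-1}2^{-in\alpha}2^{in/2}\cdot(\text{number of }\varepsilon)$, which is a finite geometric sum $\asymp 2^{m n(\frac12-\alpha)}$ when $\alpha<\frac12$ (true in our range since $\alpha<\frac{s}{2n}+\frac14<\frac{n}{4n}+\frac14\le\frac12$ for... actually one checks $\alpha<\frac12$ directly from $\alpha<\frac{s}{2n}+\frac14$ and $s<\frac n2$). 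Summing the first term: $2^{mn/2}\sum_{j\ge m}2^{-2jn\alpha}\asymp 2^{mn/2}2^{-2mn\alpha}=2^{mn(\frac12-2\alpha)}$. The dominant contribution to $(f^2,h^{\varepsilon_0}_{Q^{(m)}})$ is therefore of order $2^{mn(\frac12-2\alpha)}$ (one should check the third term $2\cdot 2^{-mn\alpha}\cdot 2^{mn(\frac12-\alpha)}=2^{mn(\frac12-2\alpha)}$ is the \emph{same} order — so take care they don't cancel; the signs $h^{\varepsilon_0}_{Q^{(m)}}(Q^{(j)})>0$ and the averages are positive, so all terms are positive and no cancellation occurs).

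**Step 3: Divergence of $\|f^2\|_{\dot H^s_{\mathcal D}}$.** With $|(f^2,h^{\varepsilon_0}_{Q^{(m)}})|\asymp 2^{mn(\frac12-2\alpha)}$, one gets
\[
\|f^2\|_{\dot H^s_{\mathcal D}}^2\gtrsim\sum_{m=1}^\infty |Q^{(m)}|^{-\frac{2s}{n}}\bigl(2^{mn(\frac12-2\alpha)}\bigr)^2=\sum_{m=1}^\infty 2^{2ms}2^{mn(1-4\alpha)}=\sum_{m=1}^\infty 2^{m(2s+n-4n\alpha)},
\]
and this diverges exactly when $2s+n-4n\alpha\ge 0$, i.e. $\alpha\le\frac{s}{2n}+\frac14$. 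Hence the strict upper bound $\alpha<\frac{s}{2n}+\frac14$ guarantees divergence, so $f^2\notin H^s_{\mathcal D}(\mathbb R^n)$. The compatibility of the two constraints is just $\frac{s}{n}<\frac{s}{2n}+\frac14\iff\frac{s}{2n}<\frac14\iff s<\frac n2$, which is the standing hypothesis — so the admissible window for $\alpha$ is non-empty precisely in this regularity range.

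**Main obstacle.** The delicate point is Step 2: correctly bookkeeping which $(Q,\varepsilon)$ pairs survive in~\eqref{E:Haar_coeff_sq}, confirming the cross term vanishes, and above all checking that the first term and the $2(f,h^\eta_P)\langle f\rangle_P$ term — which are of the \emph{same} exponential order — add rather than cancel. Establishing a clean lower bound $|(f^2,h^{\varepsilon_0}_{Q^{(m)}})|\gtrsim 2^{mn(\frac12-2\alpha)}$ (rather than just an upper bound) is what makes the divergence argument rigorous, and positivity of all the summands is the key structural fact that makes this work.
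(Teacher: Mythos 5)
Your proposal is correct and follows essentially the same route as the paper: the same $L^2$/seminorm computation for $f$, the same identification of the two positive contributions of order $2^{mn(\frac12-2\alpha)}$ to $(f^2,h^{\varepsilon_0}_{Q^{(m)}})$ (the "diagonal" sum over $Q^{(j)}\subsetneq Q^{(m)}$ and the term $2(f,h^{\varepsilon_0}_{Q^{(m)}})\langle f\rangle_{Q^{(m)}}$), and the same divergence estimate $\sum_m 2^{m(2s+n-4n\alpha)}=\infty$. The only presentational difference is that you extract the coefficients from the general formula~\eqref{E:Haar_coeff_sq} while the paper re-expands $f^2$ directly in the Haar basis; your emphasis on positivity of both contributions (hence no cancellation) is exactly the structural point the paper uses implicitly when it drops the nonnegative term $C_m 2^{mn/2}$ to obtain the lower bound.
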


\begin{remark}
Note that, if $0<s<\frac{n}{2}$, then $\frac{s}{n}<\frac{s}{2n}+\frac{1}{4}$ and hence $\alpha$ can be chosen in the desired range.
\end{remark}

\begin{proof}
    We first prove that the function $f$ in~\eqref{E:counterex_low_reg} belongs to $H^s(\mathbb{R})$. By definition, its Haar coefficients are given by
    \begin{equation*}
        (f,h_Q^{\varepsilon})=\begin{cases}
            |Q_{k}|^\alpha=2^{-kn\alpha}&\text{if }Q=Q_{k}\text{ for some }k \geq 0, \text{ and } \varepsilon=\varepsilon_0,\\
            0&\text{otherwise}.
        \end{cases}
    \end{equation*}
    Thus,
    \begin{align*}
    \| D^sf \|_{L^2}^2 
    & = \sum_{Q\in \mathcal{D}} \sum_{\varepsilon \not\equiv 1} |Q|^{-\frac{2s}{n}} |(f,h_Q^\varepsilon)|^2 
    =\sum_{Q\in \mathcal{D}} |Q|^{-\frac{2s}{n}} |(f,h_Q^{\varepsilon_0})|^2 \\
    &= \sum_{k\geq 0} |Q_{k}|^{-\frac{2s}{n}} |(f,h_{Q_{k}}^{\varepsilon_0})|^2 
    = \sum_{k\geq 0} 2^{2ks} \, 2^{-2kn\alpha}
    = \sum_{k\geq 0} 2^{2k(s-n\alpha)}
    \end{align*}
    and the latter converges because $\alpha>\frac{s}{n}$.
    To analyze $f^2$, notice that since $\varepsilon_0$ is fixed, the Haar expansion of $f$ does not involve any summation over $\varepsilon$ and therefore 
    \begin{equation*}
        f=\sum_{Q\in \mathcal{D}} \sum_{\varepsilon \not\equiv 1} (f,h_Q^\varepsilon) h_Q^\varepsilon f
        =\sum_{Q\in \mathcal{D}} (f,h_Q^{\varepsilon_0}) h_Q^{\varepsilon_0}
        = \sum_{k>0} (f,h_{Q_{k}}^{\varepsilon_0}) h_{Q_{k}}^{\varepsilon_0}.
    \end{equation*}
    Taking the square of the latter,
    \begin{align}
        f^2 & = \sum_{k>0} (f,h_{Q_{k}}^{\varepsilon_0})^2 (h_{Q_{k}}^{\varepsilon_0})^2 + 2 \sum_{m>0}\sum_{k>m}(f,h_{Q_{k}}^{\varepsilon_0})(f,h_{Q_{m}}^{\varepsilon_0}) h_{Q_{k}}^{\varepsilon_0}h_{Q_{m}}^{\varepsilon_0}\notag\\
        & = \sum_{k>0} (f,h_{Q_{k}}^{\varepsilon_0})^2 \frac{1}{|Q_{k}|}\mathbf{1}_{Q_{k}} + 2 \sum_{m>0} \sum_{k>m}(f,h_{Q_{k}}^{\varepsilon_0})(f,h_{Q_{m}}^{\varepsilon_0}) h_{Q_{k}}^{\varepsilon_0}h_{Q_{m}}^{\varepsilon_0}(Q_{k}).
        \label{E:Haar_exp_counterexample}
    \end{align}
    
    Recall that $\frac{1}{|Q_{k}|}\mathbf{1}_{Q_{k}}=\sum_{P \supseteq Q_{k}} \sum_{\varepsilon \not\equiv 1} h_P^\varepsilon (Q_{k}) h_P^\varepsilon$. By definition of $Q_{k}$, $P$ must equal $Q_{l}$ for some $l<k$. In addition, as noticed at the beginning of section~\ref{Counterexample_case<n/2}, $h_P^\varepsilon (Q_{k}) \equiv \frac{1}{\sqrt{|P|}}$, which is positive for every $\varepsilon$ and every $P \supseteq Q_{k}$. Thus, it follows from~\eqref{E:Haar_exp_counterexample} that
    {\small
    \begin{align*}
        f^2 & 
         = \sum_{k>0} (f,h_{Q_{k}}^{\varepsilon_0})^2 \sum_{m<k} \sum_{\varepsilon \not\equiv 1} h_{Q_{m}}^{\varepsilon}(Q_{k}) h_{Q_{m}}^{\varepsilon} +  2 \sum_{m>0} \sum_{k>m}(f,h_{Q_{k}}^{\varepsilon_0})(f,h_{Q_{m}}^{\varepsilon_0}) h_{Q_{m}}^{\varepsilon_0}(Q_{k}) h_{Q_{k}}^{\varepsilon_0} \\
        & = \sum_{m \in \mathbb{Z}} \sum_{k>\max\{0,m\}}(f,h_{Q_{k}}^{\varepsilon_0})^2 \sum_{\varepsilon \not\equiv 1} \frac{1}{\sqrt{|Q_{m}|}} h_{Q_{m}}^{\varepsilon} + 2 \sum_{k>0} \sum_{0<m<k} (f,h_{Q_{k}}^{\varepsilon_0})(f,h_{Q_{m}}^{\varepsilon_0}) \frac{1}{\sqrt{|Q_{m}|}} h_{Q_{k}}^{\varepsilon_0} \\
        & = \sum_{m>0} \left( \sum_{k>m} |Q_{k}|^{2\alpha} \right)  2^{\frac{mn}{2}}
 \sum_{\varepsilon \not\equiv 1} h_{Q_{m}}^\varepsilon + \sum_{m \leq 0 }   \left( \sum_{k>0} |Q_{k}|^{2\alpha} \right) 2^{\frac{mn}{2}}\sum_{\varepsilon \not\equiv 1} h_{Q_{m}}^\varepsilon  \\
        & + 2 \sum_{k>0} |Q_{k}|^{\alpha} \left( \sum_{0<m<k} |Q_{m}|^{\alpha} 2^{\frac{mn}{2}} \right) h_{Q_{k}}^{\varepsilon_0} \\
        & = \sum_{m>0} \left[ 2^{\frac{mn}{2}} \left( \sum_{k>m} 2^{-2kn\alpha} \right) \sum_{\varepsilon \not\equiv 1} h_{Q_{m}}^\varepsilon + 2^{-mn\alpha} \left( \sum_{0<k<m} 2^{-kn\alpha} 2^{\frac{kn}{2}} \right)  h_{Q_{m}}^{\varepsilon_0} \right] \\
        &+ C \sum_{m\leq 0} 2^{\frac{mn}{2}} \sum_{\varepsilon \not\equiv 1} h_{Q_{m}}^\varepsilon \\
        & = \sum_{m>0} \left[ 2^{\frac{mn}{2}} C_m  + 2^{-mn\alpha} \widetilde{C}_m   \right]  h_{Q_{m}}^{\varepsilon_0} + \sum_{m>0} 2^{\frac{mn}{2}} C_m \sum_{\substack{\varepsilon \not\equiv 1 \\ \varepsilon \neq \epsilon_0}} h_{Q_{m}}^\varepsilon + C \sum_{m\leq 0} 2^{\frac{mn}{2}} \sum_{\varepsilon \not\equiv 1} h_{Q_{m}}^\varepsilon,
    \end{align*}
    }
        where in the last equality, 
        $ C_m :=  \sum_{k>m} 2^{-2kn\alpha}$ and $ \widetilde{C}_m  :=  \sum_{0<k<m} 2^{-kn\alpha+\frac{kn}{2}}$. Therefore, 
    \begin{equation}\label{E:Haar_coeffs_counter_sq}
            (f^2,h_Q^\varepsilon)=\begin{cases}
                C 2^\frac{mn}{2}&\text{if }Q=Q_{m}\text{ for some } m\leq 0, \text{ and } \varepsilon \not\equiv 1,\\
                C_m 2^\frac{mn}{2} &\text{if }Q=Q_{m}\text{ for some } m>0, \text{ and } \varepsilon \not\equiv 1, \varepsilon \neq \varepsilon_0\\
                C_m 2^\frac{mn}{2} +  2^{-mn\alpha} \widetilde{C}_m &\text{if }Q=Q_{m}\text{ for some } m > 0,\text{ and } \varepsilon= \varepsilon_0\\
                0&\text{else}
            \end{cases}
        \end{equation}
    and
    \begin{align*}
        \widetilde{C}_m  &:=  \sum_{0<k<m} 2^{-kn\alpha+\frac{kn}{2}} = \sum_{0<k<m} (2^{\frac{n}{2}-n\alpha})^k = \frac{(2^{\frac{n}{2}-n\alpha})^m-2^{\frac{n}{2}-n\alpha}}{2^{\frac{n}{2}-n\alpha}-1} \sim (2^{\frac{n}{2}-n\alpha})^m.
    \end{align*}
    Plugging the coefficients~\eqref{E:Haar_coeffs_counter_sq} into the Sobolev seminorm yields the explicit lower bound 
    
    \begin{align*}
        \| D^s f^2 \|_{L^2}^2 &\geq \sum_{m>0} |Q_{m}|^{-\frac{2s}{n}}|(f^2,h_{Q_{m}}^{\varepsilon_0})|^2 = \sum_{m>0} (2^{-mn})^{-\frac{2s}{n}} (C_m 2^\frac{mn}{2} + 2^{-mn\alpha} \widetilde{C}_m )^2 \\
        & \geq  \sum_{m>0} 2^{2ms} 2^{-2mn\alpha} \widetilde{C}_m ^2 =  \sum_{m>0} 2^{2ms}\, 2^{-2mn\alpha} \, 2^{m(n-2n\alpha)} \\
        &= \sum_{m>0} 2^{m(2s-4n\alpha+n)}.
    \end{align*}
    The latter series diverges because $\alpha < \frac{s}{2n}+\frac{1}{4}$, hence $f^2 \notin H^s(\mathbb{R}^n)$ as desired.
\end{proof}

\subsection{Case \texorpdfstring{$s=1/2$}{[critical]}}
In the critical case, the space $H^{\frac{n}{2}}(\mathbb{R}^n)$ also fails to be an algebra. Instead of a function like~\eqref{E:counterex_low_reg}, we see in Proposition~\ref{P:counterex_BMO} that the counterexample requires logarithmic correction in the weights.

\begin{proposition}\label{P:counterex_BMO}
    The space $H^{\frac{n}{2}}(\mathbb{R}^n)$ is not an algebra. In particular, let $\varepsilon_0 := (0,1,...,1)$. Then, the function
    \[
    f:= \sum_{k>0} \frac{1}{2^{kn/2}k^{\alpha/2}} h_{Q_{k}}^{\varepsilon_0}
    \]
    with $1<\alpha \leq 3/2$ belongs to $H^{\frac{n}{2}}(\mathbb{R}^n)$ while $f^2\notin H^{\frac{n}{2}}(\mathbb{R}^n)$.
\end{proposition}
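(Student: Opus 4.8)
The structure mirrors Proposition~\ref{P:couterex_low_reg} exactly, with $\alpha$-powers replaced by logarithmic weights $k^{-\alpha/2}$. First I would verify $f\in H^{n/2}(\mathbb{R}^n)$ by computing the seminorm directly from the Haar coefficients: since $(f,h_{Q^{(k)}}^{\varepsilon_0})=2^{-kn/2}k^{-\alpha/2}$ and all other coefficients vanish, Lemma~\ref{L:norm_equivalence} reduces the question to
\[
\sum_{k>0}|Q^{(k)}|^{-1}|(f,h_{Q^{(k)}}^{\varepsilon_0})|^2=\sum_{k>0}2^{kn}\cdot 2^{-kn}k^{-\alpha}=\sum_{k>0}k^{-\alpha},
\]
which converges precisely because $\alpha>1$. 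The $L^2$-part is even smaller, so $f\in H^{n/2}(\mathbb{R}^n)$.

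\textbf{Computing the Haar expansion of $f^2$.} Next I would repeat the computation leading to~\eqref{E:Haar_exp_counterexample} and~\eqref{E:Haar_coeffs_counter_sq} verbatim, now with $|Q^{(k)}|^{\alpha}$ replaced by $2^{-kn/2}k^{-\alpha/2}$. The diagonal term $(f,h_{Q^{(k)}}^{\varepsilon_0})^2(h_{Q^{(k)}}^{\varepsilon_0})^2=2^{-kn}k^{-\alpha}|Q^{(k)}|^{-1}\mathbf 1_{Q^{(k)}}=k^{-\alpha}\mathbf 1_{Q^{(k)}}$, expanded via $|Q^{(k)}|^{-1}\mathbf 1_{Q^{(k)}}=\sum_{m<k}\sum_{\varepsilon\not\equiv 1}h_{Q^{(m)}}^\varepsilon(Q^{(k)})h_{Q^{(m)}}^\varepsilon$ with $h_{Q^{(m)}}^\varepsilon(Q^{(k)})=2^{mn/2}$, contributes to the $h_{Q^{(m)}}^{\varepsilon_0}$-coefficient a term $2^{mn/2}\sum_{k>\max\{0,m\}}|(f,h_{Q^{(k)}}^{\varepsilon_0})|^2=2^{mn/2}\,C_m$ with now $C_m=\sum_{k>m}2^{-kn}k^{-\alpha}$ for $m>0$. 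The cross term $2\sum_{0<m<k}(f,h_{Q^{(k)}}^{\varepsilon_0})(f,h_{Q^{(m)}}^{\varepsilon_0})h_{Q^{(m)}}^{\varepsilon_0}(Q^{(k)})h_{Q^{(k)}}^{\varepsilon_0}$ contributes to the $h_{Q^{(m)}}^{\varepsilon_0}$-coefficient the quantity $2\cdot 2^{-mn/2}m^{-\alpha/2}\widetilde C_m$ where now $\widetilde C_m=\sum_{0<k<m}2^{-kn/2}k^{-\alpha/2}\cdot 2^{kn/2}=\sum_{0<k<m}k^{-\alpha/2}$. Since $\alpha\le 3/2<2$, we have $\widetilde C_m\sim m^{1-\alpha/2}$ by comparison with the integral $\int_1^m t^{-\alpha/2}\,dt$.

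\textbf{Lower bound and divergence.} Finally, as in Proposition~\ref{P:couterex_low_reg}, I would keep only the $m>0$, $\varepsilon=\varepsilon_0$ coefficients of $f^2$ and among those keep only the cross-term contribution $2^{-mn/2}m^{-\alpha/2}\widetilde C_m$, which is nonnegative and hence cannot be cancelled by the (also nonnegative) diagonal contribution $2^{mn/2}C_m$. This gives
\[
\|D^{n/2}f^2\|_{L^2}^2\ge \sum_{m>0}|Q^{(m)}|^{-1}\,\big|2^{-mn/2}m^{-\alpha/2}\widetilde C_m\big|^2 \apprge \sum_{m>0}2^{mn}\cdot 2^{-mn}m^{-\alpha}\,m^{2-\alpha}=\sum_{m>0}m^{2-2\alpha},
\]
which diverges exactly when $2-2\alpha\ge -1$, i.e.\ $\alpha\le 3/2$. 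Combined with $\alpha>1$ from the first step, the admissible range $1<\alpha\le 3/2$ is nonempty, and on it $f\in H^{n/2}(\mathbb{R}^n)$ while $f^2\notin H^{n/2}(\mathbb{R}^n)$, proving the space is not an algebra.

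\textbf{Main obstacle.} The one delicate point is the asymptotic $\widetilde C_m\sim m^{1-\alpha/2}$: this requires $\alpha/2<1$, which is why the upper threshold is $\alpha\le 3/2$ rather than something larger, and it is the interplay of the two constraints ($\alpha>1$ for $f\in H^{n/2}$, $\alpha\le 3/2$ for $f^2\notin H^{n/2}$ via $2-2\alpha\ge-1$) that makes the logarithmic correction—rather than a pure power—necessary in the critical case. The rest is bookkeeping identical to the subcritical case, so I would state the Haar-coefficient formula~\eqref{E:Haar_coeffs_counter_sq} in its logarithmic analogue and refer back to the computation in Proposition~\ref{P:couterex_low_reg} to avoid repetition.
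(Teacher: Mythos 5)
Your proposal is correct and follows essentially the same route as the paper: verify $f\in H^{n/2}$ via $\sum_k k^{-\alpha}<\infty$, compute the Haar coefficients of $f^2$ exactly as in the subcritical counterexample with $\lambda_k=2^{-kn/2}k^{-\alpha/2}$, and lower-bound the seminorm by the nonnegative cross-term contribution $\lambda_m\widetilde C_m$ with $\widetilde C_m=\sum_{0<k<m}k^{-\alpha/2}\sim m^{1-\alpha/2}$, yielding the divergent series $\sum_m m^{2-2\alpha}$ for $\alpha\le 3/2$. The only nitpick is in your closing remark: the asymptotic $\widetilde C_m\sim m^{1-\alpha/2}$ actually holds for all $\alpha<2$, so the threshold $3/2$ comes solely from the divergence condition $2-2\alpha\ge -1$, as you correctly note afterwards.
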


\begin{proof}
    Note first that, by definition of $f$, for any $Q\in\mathcal{D}$
    \begin{equation*}
        (f,h_Q^\varepsilon)
        =\begin{cases}
            \frac{1}{2^{kn/2}k^{\alpha/2}} &\text{if }Q=Q_{k}\text{ for some }k> 0 \text{ and } \varepsilon=\varepsilon_0,\\
            0&\text{otherwise},
        \end{cases}
    \end{equation*}
whence
    \begin{align*}
    \| D^{\frac{n}{2}}f \|_{L^2}^2 
    &=  \sum_{Q\in \mathcal{D}} \sum_{\varepsilon \not\equiv 1} |Q|^{-1} |(f,h_Q^\varepsilon)|^2 = \sum_{Q\in \mathcal{D}} |Q|^{-1} |(f,h_Q^{\varepsilon_0})|^2
     = \sum_{k> 0} |Q_{k}|^{-1} |(f,h_{Q_{k}}^{\varepsilon_0})|^2 \\
    & = \sum_{k> 0} 2^{kn} \, \frac{1}{2^{kn}k^{\alpha}}= \sum_{k> 0} \frac{1}{k^\alpha}
    \end{align*}
and the latter converges since $\alpha>1$. A similar computation gives
    \begin{equation*}
    \| f \|_{L^2}^2 =  \sum_{Q\in \mathcal{D}} |(f,h_Q^{\varepsilon_0})|^2 = \sum_{k> 0} \frac{1}{2^{kn}k^{\alpha}} < +\infty,
    \end{equation*}
    hence $f\in H^{\frac{n}{2}}(\mathbb{R}^n)$. 

    To see that $f^2\notin H^{n/2}(\mathbb{R}^n)$ we first need to find the Haar coefficients of $f^2$. So, let's denote $\lambda_k := (f,h_{Q_{k}}^{\epsilon_0})$ for simplicity 
    and, similarly to the low regularity case, we use \eqref{E:Haar_exp_counterexample} to write
    \begin{align*}
        f= \sum_{k>0} (f,h_{Q_{k}}^{\varepsilon_0}) h_{Q_{k}}^{\varepsilon_0}(x).
    \end{align*}
    Taking the square, as in Proposition \ref{P:couterex_low_reg}, we get
    \begin{align*}
        f^2 & = \sum_{k>0} (f,h_{Q_{k}}^{\epsilon_0})^2 (h_{Q_{k}}^{\epsilon_0})^2 + 2 \sum_{m>0}\sum_{k>m}(f,h_{Q_{k}}^{\epsilon_0})(f,h_{Q_{m}}^{\epsilon_0}) h_{Q_{k}}^{\epsilon_0}h_{Q_{m}}^{\epsilon_0}\\
        & = \sum_{m \in \mathbb{Z}} \sum_{k>\max\{0,m\}}(f,h_{Q_{k}}^{\epsilon_0})^2 \sum_{\epsilon \not\equiv 1} \frac{1}{\sqrt{|Q_{m}|}} h_{Q_{m}}^{\epsilon} + 2 \sum_{k>0} \sum_{0<m<k} (f,h_{Q_{k}}^{\epsilon_0})(f,h_{Q_{m}}^{\epsilon_0}) \frac{1}{\sqrt{|Q_{m}|}} h_{Q_{k}}^{\epsilon_0} \\
        & = \sum_{m>0} \left( \sum_{k>m} \lambda_k^2 \right)  2^{\frac{mn}{2}}
 \sum_{\epsilon \not\equiv 1} h_{Q_{m}}^\epsilon + \sum_{m \leq 0 }   \left( \sum_{k>0} \lambda_k^2 \right) 2^{\frac{mn}{2}}\sum_{\epsilon \not\equiv 1} h_{Q_{m}}^\epsilon  \\
        & + 2 \sum_{k>0} \lambda_k \left( \sum_{0<m<k} \lambda_m 2^{\frac{mn}{2}} \right) h_{Q_{k}}^{\epsilon_0} \\
        & = \sum_{m>0} \left[ 2^{\frac{mn}{2}} \left( \sum_{k>m} \lambda_k^2 \right) \sum_{\epsilon \not\equiv 1} h_{Q_{m}}^\epsilon + \lambda_m \left( \sum_{0<k<m} \lambda_k 2^{\frac{kn}{2}} \right)  h_{Q_{m}}^{\epsilon_0} \right]  + C \sum_{m\leq 0} 2^{\frac{mn}{2}} \sum_{\epsilon \not\equiv 1} h_{Q_{m}}^\epsilon \\
        & = \sum_{m>0} \left[ 2^{\frac{mn}{2}} C_m  + \lambda_m \widetilde{C}_m   \right]  h_{Q_{m}}^{\epsilon_0} + \sum_{m>0} 2^{\frac{mn}{2}} C_m \sum_{\substack{\epsilon \not\equiv 1 \\ \epsilon \neq \epsilon_0}} h_{Q_{m}}^\epsilon + C \sum_{m\leq 0} 2^{\frac{mn}{2}} \sum_{\epsilon \not\equiv 1} h_{Q_{m}}^\epsilon,
    \end{align*}
    where in the third equality, we break the first term into two sums for $m>0$ and $m\leq 0$ and remember that $\lambda_k = (f,h_{Q_{k}}^{\epsilon_0})$. Next, we rename $k$ to $m$ and $m$ to $k$ on the third term above, gather the terms together and notice that the series of $\lambda_k^2$ converges. Finally, the last equality follows by isolating $h_{Q_{m}}^{\epsilon_0}$ from $ \sum_{\epsilon \not\equiv 1} h_{Q_{m}}^\epsilon$ and denoting $ C_m :=  \sum_{k>m} \lambda_k^2$ and $ \widetilde{C}_m  :=  \sum_{0<k<m} \lambda_k 2^{\frac{kn}{2}}$. Therefore, we have that
    \begin{equation*}
            (f^2,h_Q^\epsilon)=\begin{cases}
                C 2^\frac{mn}{2}&\text{if }Q=Q_{m}\text{ for some } m\leq 0, \text{ and } \epsilon \not\equiv 1,\\
                C_m 2^\frac{mn}{2} &\text{if }Q=Q_{m}\text{ for some } m>0, \text{ and } \epsilon \not\equiv 1, \epsilon \neq \epsilon_0\\
                C_m 2^\frac{mn}{2} + \lambda_m \widetilde{C}_m &\text{if }Q=Q_{m}\text{ for some } m > 0,\text{ and } \epsilon= \epsilon_0\\
                0&\text{else}.
            \end{cases}
        \end{equation*}
    We notice also that 
    \begin{align*}
        \widetilde{C}_m  &:=  \sum_{0<k<m} \lambda_k 2^{\frac{kn}{2}} = \sum_{0<k<m} \frac{1}{k^{\alpha/2}} \sim \int_1^m x^{-\alpha/2} dx \sim m^{-\frac{\alpha}{2}+1}.
    \end{align*}
    Thus, 
    \begin{align*}
        \| D^{\frac{n}{2}} f^2 \|_{L^2}^2 &\geq \sum_{m>0} |Q_{m}|^{-1}|(f^2,h_{Q_{m}}^{\epsilon_0})|^2 = \sum_{m>0} 2^{mn} (C_m 2^\frac{mn}{2} + \lambda_m \widetilde{C}_m )^2 \\
        & \geq  \sum_{m>0} 2^{mn} \lambda_m^2 \widetilde{C}_m ^2 = \sum_{m>0} \frac{1}{m^{2\alpha -2}}
    \end{align*}
    and the latter series diverges because $\alpha \leq \frac{3}{2}$. Therefore, $f^2 \notin H^{\frac{n}{2}}(\mathbb{R}^n)$.
\end{proof}

\subsection{High regularity: \texorpdfstring{$n/2<s<n$}{[large]}}\mbox{}
Besides the Sobolev embedding~\eqref{E:embedding_high}, the key estimate in the proof of the algebra property in the ``high regularity range'' is an estimate that bounds an expression involving $f^2$ by another which, instead, depends on $f$ without powers, c.f. Lemma~\ref{L:estimate_high_reg}. While being a generalization of the 1-dimensional estimate obtained in~\cite[Lemma 12]{ARF24}, the proof of the latter in $n$ dimensions is substantially more involved and relies on an observation of independent interest that is presented separately in Lemma~\ref{L:estimate_high_reg_help}.

\begin{lemma}\label{L:estimate_high_reg}
    For any fixed $Q\in\mathcal{D}$ with $|Q|\leq 1$,
    \begin{equation}\label{E:estimate_high_reg}
        \sum_{P\subseteq Q}\sum_{\varepsilon\not\equiv 1}|P|^{-\frac{2s}{n}}|(f^2,h_P^\varepsilon)|^2\apprle_{s,n} |Q|^{\frac{2s}{n}-1}\|f\|_{\dot{H}^s}^2\sum_{P\subseteq Q}\sum_{\varepsilon\not\equiv 1}|P|^{-\frac{2s}{n}}|(f,h_P^\varepsilon)|^2.
    \end{equation}
\end{lemma}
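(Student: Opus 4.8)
The plan is to start from the explicit formula~\eqref{E:Haar_coeff_sq} for $(f^2,h_P^\eta)$ and estimate each of its three pieces separately. Write $(f^2,h_P^\eta) = A_P^\eta + B_P^\eta + C_P^\eta$, where $A_P^\eta = \sum_{Q\subseteq P}\sum_{\varepsilon\not\equiv1}(f,h_Q^\varepsilon)^2 h_P^\eta(Q)$ is the ``diagonal'' term, $B_P^\eta = \sum_{Q\subseteq P}\sum_{\varepsilon+\widetilde\varepsilon=\eta}(f,h_Q^\varepsilon)(f,h_Q^{\widetilde\varepsilon})$ is the ``cross'' term, and $C_P^\eta = 2(f,h_P^\eta)\langle f\rangle_P$. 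Since $|h_P^\eta(Q)| = |P|^{-1/2}$, by the triangle inequality and $|ab|\le\frac12(a^2+b^2)$ one gets $|A_P^\eta|, |B_P^\eta| \apprle_n |P|^{-1/2}\sum_{Q\subseteq P}\sum_{\varepsilon\not\equiv1}(f,h_Q^\varepsilon)^2$, so these two are controlled by a single quantity $\sigma_P := \sum_{Q\subseteq P}\sum_{\varepsilon\not\equiv1}(f,h_Q^\varepsilon)^2$. For $C_P^\eta$, use $|\langle f\rangle_P|\apprle \|f\|_{L^\infty}$ via the Sobolev embedding (Proposition~\ref{P:Morrey}), bounding $\|f\|_{L^\infty}\apprle\|f\|_{H^s}$; alternatively estimate $\langle f\rangle_P$ directly through the telescoping Lemma~\ref{L:telescopic_averages} and Cauchy--Schwarz as in the proof of Proposition~\ref{P:Morrey}.

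The main work is the diagonal/cross contribution
\[
\Sigma_1 := \sum_{P\subseteq Q}\sum_{\varepsilon\not\equiv1}|P|^{-\frac{2s}{n}}\,|P|^{-1}\,\sigma_P^2,
\]
which must be shown to be $\apprle_{s,n}|Q|^{\frac{2s}{n}-1}\|f\|_{\dot H^s}^2\sum_{P\subseteq Q}\sum_{\varepsilon\not\equiv1}|P|^{-\frac{2s}{n}}(f,h_P^\varepsilon)^2$. The idea is to interchange the order of summation: $\sigma_P = \sum_{R\subseteq P}\sum_{\varepsilon\not\equiv1}(f,h_R^\varepsilon)^2$, so writing $\sigma_P^2$ as a double sum over $R,R'\subseteq P$ and then summing over $P$ with $R\cup R'\subseteq P\subseteq Q$ produces, by Proposition~\ref{P:weighted_indicator}-type geometric sums (here a sum of $|P|^{-\frac{2s}{n}-1}$ over the chain of dyadic ancestors of the larger of $R,R'$ inside $Q$, which is a finite geometric series since $\frac{2s}{n}>1$), a factor $|R_{\max}|^{-\frac{2s}{n}-1}$ where $R_{\max}=R$ if $R'\subseteq R$. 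By symmetry one is reduced to $\sum_{R'\subseteq R\subseteq Q}|R|^{-\frac{2s}{n}-1}(f,h_R^\varepsilon)^2(f,h_{R'}^{\widetilde\varepsilon})^2$ (with the $\varepsilon$-sums), and the inner sum over $R'\subseteq R$ of $(f,h_{R'}^{\widetilde\varepsilon})^2$ is at most $|R|^{\frac{2s}{n}}\|f\|_{\dot H^s}^2$ after inserting and removing the weight $|R'|^{-\frac{2s}{n}}\le |R|^{-\frac{2s}{n}}$. This leaves $\|f\|_{\dot H^s}^2\sum_{R\subseteq Q}|R|^{-1}(f,h_R^\varepsilon)^2$, and a further application of $|R|^{-1}\le|Q|^{\frac{2s}{n}-1}|R|^{-\frac{2s}{n}}$ (valid since $|R|\le|Q|\le 1$ and $\frac{2s}{n}-1>0$... one checks the exponent bookkeeping carefully) delivers exactly the right-hand side of~\eqref{E:estimate_high_reg}.

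For the $C_P^\eta$ contribution one needs $\sum_{P\subseteq Q}\sum_{\varepsilon\not\equiv1}|P|^{-\frac{2s}{n}}(f,h_P^\varepsilon)^2\langle f\rangle_P^2 \apprle |Q|^{\frac{2s}{n}-1}\|f\|_{\dot H^s}^2\sum_{P\subseteq Q}\sum_{\varepsilon\not\equiv1}|P|^{-\frac{2s}{n}}(f,h_P^\varepsilon)^2$, which follows immediately by pulling $\sup_{P\subseteq Q}\langle f\rangle_P^2\le\|f\|_{L^\infty}^2\apprle\|f\|_{H^s}^2$ out of the sum — but we actually want $\|f\|_{\dot H^s}^2$ and an extra factor $|Q|^{\frac{2s}{n}-1}$, so instead bound $\langle f\rangle_P$ via Lemma~\ref{L:telescopic_averages} starting from the top cube of side length $1$ (using $|Q_x^{(0)}|=1$) and Cauchy--Schwarz to get $|\langle f\rangle_P|^2\apprle_{s,n}\|f\|_{\dot H^s}^2 |P|^{\frac{2s}{n}-1}\apprle\|f\|_{\dot H^s}^2|Q|^{\frac{2s}{n}-1}$ when $|P|\le|Q|\le 1$; the computation is the one already carried out in~\eqref{E:Morrey_const}. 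Plugging this back gives the desired bound for the $C_P^\eta$ piece directly.

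\textbf{Main obstacle.} The hard part is the combinatorial rearrangement in $\Sigma_1$: organizing the triple sum over $P\subseteq Q$ and $R,R'\subseteq P$ so that summing over $P$ first yields a convergent geometric factor attached to the \emph{larger} of $R$ and $R'$, and then correctly distributing the Sobolev weights $|R|^{-\frac{2s}{n}}$ so that one factor reconstructs $\|f\|_{\dot H^s}^2$ while the other reconstructs $\sum_{R\subseteq Q}|R|^{-\frac{2s}{n}}(f,h_R^\varepsilon)^2$, with the leftover powers of $|Q|$ and $|R|$ combining into the prefactor $|Q|^{\frac{2s}{n}-1}$. This is where the hypothesis $s>n/2$ (equivalently $\frac{2s}{n}>1$) is essential — it is exactly what makes $\sum_{R\subseteq P\subseteq Q}|P|^{-\frac{2s}{n}-1}$ a finite geometric sum dominated by its top term $|R|^{-\frac{2s}{n}-1}$ — and it is precisely this step that is isolated and handled by the auxiliary Lemma~\ref{L:estimate_high_reg_help} promised in the text.
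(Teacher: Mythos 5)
Your decomposition via~\eqref{E:Haar_coeff_sq} and your treatment of the main term $\Sigma_1$ --- interchanging the order of summation so that the sum over $P$ containing both $R$ and $R'$ collapses to a convergent geometric series attached to the larger of the two cubes, then redistributing the weights via $|R'|^{2s/n}\le|R|^{2s/n}$ and $|R|^{-1}\le|Q|^{\frac{2s}{n}-1}|R|^{-\frac{2s}{n}}$ --- is essentially the paper's argument for its term $S_1$, and the exponent bookkeeping checks out (with the small caveat that $R$ and $R'$ need not be nested, so the geometric factor is really $|R\vee R'|^{-\frac{2s}{n}-1}\le|R_{\max}|^{-\frac{2s}{n}-1}$ and the inner sum should run over all $R'\subseteq Q$ with $|R'|\le|R|$; the bound survives unchanged). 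Absorbing the cross term $B_P^\eta$ into the same quantity $\sigma_P$ is a mild simplification over the paper, which treats it separately by Cauchy--Schwarz in the $\varepsilon$-indices; that index count over the $2^n-1$ sign patterns is the actual content of Lemma~\ref{L:estimate_high_reg_help}, not the geometric-sum-over-scales step you guessed it handles.

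The one genuine error is the ``improved'' bound you propose for the average term: $|\langle f\rangle_P|^2\apprle_{s,n}\|f\|_{\dot H^s(\mathbb{R}^n)}^2|P|^{\frac{2s}{n}-1}$ is false. Running Lemma~\ref{L:telescopic_averages} from $P$ up to its ancestor of volume $1$ and applying Cauchy--Schwarz produces the factor $\sum_{P\subsetneq R}|R|^{\frac{2s}{n}-1}$, and precisely because $\frac{2s}{n}-1>0$ this geometric series is dominated by its \emph{largest} cube, i.e.\ it is $\simeq_{s,n}1$, not $\simeq|P|^{\frac{2s}{n}-1}$. Indeed $\langle f\rangle_P$ cannot decay as $|P|\to0$ in general: take $f=h_{Q_0}^{\varepsilon}$ for a unit cube $Q_0$ and $P$ deep inside a level set, so that $\langle f\rangle_P=\pm1$ while $\|f\|_{\dot H^s}=1$. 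You must therefore fall back on your first option, $|\langle f\rangle_P|\le\|f\|_{L^\infty(\mathbb{R}^n)}\apprle_{s,n}\|f\|_{H^s(\mathbb{R}^n)}$ via Proposition~\ref{P:Morrey}, which is exactly what the paper does. This yields the right-hand side of~\eqref{E:estimate_high_reg} \emph{without} the extra factor $|Q|^{\frac{2s}{n}-1}\le1$ for that piece --- a discrepancy with the stated inequality that the paper's own proof of $S_2$ and $S_3$ shares, and which you were right to notice; it is harmless in the application, where $Q$ runs over cubes of a fixed side length so the factor is a constant.
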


\begin{proof}
    Substituting the Haar coefficients by their expression from~\eqref{E:Haar_coeff_sq}, the right-hand side of~\eqref{E:estimate_high_reg} can be estimated as
        \begin{align}
            &\sum_{P\subseteq Q}\sum_{\varepsilon\not\equiv 1}|P|^{-\frac{2s}{n}}|(f^2,h_P^\varepsilon)|^2 \notag \\
            &=\sum_{P\subseteq Q}\sum_{\varepsilon\not\equiv 1}|P|^{-\frac{2s}{n}}\bigg(\sum_{\widetilde{Q}\subseteq P}\sum_{\tilde{\varepsilon}\not\equiv 1}(f,h_{\widetilde{Q}}^{\tilde{\varepsilon}})^2h_p^\varepsilon(\widetilde{Q})+\sum_{\substack{\varepsilon_1,\varepsilon_2\not\equiv 1\\ \varepsilon_1+\varepsilon_2=\varepsilon}}(f,h_P^{\varepsilon_1})(f,h_P^{\varepsilon_2})+2(f,h_p^\varepsilon)\langle f\rangle_P\bigg)^2 \notag \\
            &\leq 4\sum_{P\subseteq Q}\sum_{\varepsilon\not\equiv 1}|P|^{-\frac{2s}{n}}\bigg(\sum_{\tilde{Q}\subseteq P}\sum_{\tilde{\varepsilon}\not\equiv 1}(f,h_{\tilde{Q}}^{\tilde{\varepsilon}})^2h_P^\varepsilon(\tilde{Q})\bigg)^2 \label{E:high_reg_01} \\
            &+4\sum_{P\subseteq Q}\sum_{\varepsilon\not\equiv 1}|P|^{-\frac{2s}{n}}\bigg(\sum_{\substack{\varepsilon_1,\varepsilon_2\not\equiv 1\\ \varepsilon_1+\varepsilon_2=\varepsilon}}(f,h_P^{\varepsilon_1})(f,h_P^{\varepsilon_2})\bigg)^2 \notag \\
            &+4\sum_{P\subseteq Q}\sum_{\varepsilon\not\equiv 1}|P|^{-\frac{2s}{n}}(f,h_P^\varepsilon)^2\langle f\rangle_P^2
            =:4(S_1+S_2+S_3).\notag
        \end{align}
    To estimate $S_3$, applying the embedding from Proposition~\ref{P:Morrey} yields
    \begin{align*}
       \sum_{P\subseteq Q}\sum_{\varepsilon\not\equiv 1}|P|^{-\frac{2s}{n}}(f,h_P^\varepsilon)^2\langle f\rangle_P^2
       &\leq \|f\|_{L^\infty}^2\sum_{P\subseteq Q}\sum_{\varepsilon\not\equiv 1}|P|^{-\frac{2s}{n}}(f,h_P^\varepsilon)^2\\
       &\apprle_{s,n}\|f\|_{\dot{H}^s(\mathbb{R}^n)}^2\sum_{P\subseteq Q}\sum_{\varepsilon\not\equiv 1}|P|^{-\frac{2s}{n}}(f,h_P^\varepsilon)^2.
    \end{align*}
    To estimate $S_2$, note first that the definition of summation for indices $\varepsilon$, c.f.~\eqref{E:def_index_sum}, the condition $\varepsilon_1+\varepsilon_2=\varepsilon$ is equivalent to $\varepsilon_1+\varepsilon=\varepsilon_2$ and we may thus rewrite 
    \begin{equation*}
        \sum_{P\subseteq Q}\sum_{\varepsilon\not\equiv 1}|P|^{-\frac{2s}{n}}\bigg[\sum_{\substack{\varepsilon_1,\varepsilon_2\not\equiv 1\\ \varepsilon_1+\varepsilon_2=\varepsilon}}(f,h_P^{\varepsilon_1})(f,h_P^{\varepsilon_2})\bigg]^2
        =\sum_{P\subseteq Q}\sum_{\varepsilon\not\equiv 1}|P|^{-\frac{2s}{n}}\bigg[\sum_{\substack{\varepsilon_1\not\equiv 1\\ \varepsilon_1\not\equiv\varepsilon}}(f,h_P^{\varepsilon_1})(f,h_P^{\varepsilon+\varepsilon_1})\bigg]^2.
    \end{equation*}
    Applying Cauchy-Schwarz and Lemma~\ref{L:estimate_high_reg_help} the latter is bounded by
    \begin{align*}
      &\sum_{P\subseteq Q}\sum_{\varepsilon\not\equiv 1}|P|^{-\frac{2s}{n}}\bigg[\sum_{\substack{\varepsilon_1\not\equiv 1\\ \varepsilon_1\not\equiv\varepsilon}}(f,h_P^{\varepsilon_1})^2\bigg]
      \bigg[\sum_{\substack{\varepsilon_1\not\equiv 1\\ \varepsilon_1\not\equiv\varepsilon}}(f,h_P^{\varepsilon+\varepsilon_1})^2\bigg]\\
      &\leq \sum_{P\subseteq Q}|P|^{-\frac{2s}{n}}\bigg[\sum_{\tilde{\varepsilon}\not\equiv 1}(f,h_P^{\tilde{\varepsilon}})^2\bigg]\bigg[\sum_{\varepsilon\not\equiv 1}\sum_{\substack{\varepsilon_1\not\equiv 1\\ \varepsilon_1\not\equiv\varepsilon}}(f,h_P^{\varepsilon+\varepsilon_1})^2\bigg]\\
      &\apprle_n \|f\|_{H^s(\mathbb{R}^n)}^2 \sum_{P\subseteq Q}|P|^{-\frac{2s}{n}}\sum_{\tilde{\varepsilon}\not\equiv 1}(f,h_P^{\tilde{\varepsilon}})^2.
    \end{align*}
    Finally, we estimate $S_1$ by first writing out the square in~\eqref{E:high_reg_01}. Then,
    \begin{multline}\label{E:high_reg_02}
        \sum_{P\subseteq Q}\sum_{\varepsilon\not\equiv 1}|P|^{-\frac{2s}{n}}
        \sum_{\widetilde{Q}\subsetneq P}\sum_{\widetilde{K}\subsetneq P}\sum_{\tilde{\varepsilon}\not\equiv 1}\sum_{\tilde{\eta}\not\equiv1}
        (f,h_{\widetilde{Q}}^{\tilde{\varepsilon}})^2(f,h_{\widetilde{K}}^{\tilde{\eta}})^2h_P^\varepsilon(\widetilde{Q})h_P^\varepsilon(\widetilde{K})\\
        =\sum_{\widetilde{Q}\subsetneq Q}\sum_{\tilde{\varepsilon}\not\equiv 1}|\widetilde{Q}|^{-\frac{2s}{n}}(f,h_{\widetilde{Q}}^{\tilde{\varepsilon}})^2\sum_{\widetilde{K}\subsetneq Q}\sum_{\tilde{\eta}\not\equiv1}|\widetilde{K}|^{-\frac{2s}{n}}(f,h_{\widetilde{K}}^{\tilde{\eta}})^2\times\\
        \times
        \bigg[\sum_{\substack{\widetilde{Q}\subsetneq P\subseteq Q\\ \widetilde{K}\subsetneq P\subseteq Q}}\sum_{\varepsilon\not\equiv1}|P|^{-\frac{2s}{n}} |\widetilde{Q}|^{\frac{2s}{n}}h_P^\varepsilon(\widetilde{Q})|\widetilde{K}|^{\frac{2s}{n}}h_P^\varepsilon(\widetilde{K})\bigg].
    \end{multline}
    Observe that it is enough to show that the term in brackets is bounded by a constant multiple of $|Q|^{\frac{2s}{n}-1}$. Afterwards, it will follow that 
    \begin{align*}
        S_1&\apprle_{s,n} |Q|^{\frac{2s}{n}-1}\sum_{\widetilde{Q}\subsetneq P}\sum_{\tilde{\varepsilon}\not\equiv 1}|\widetilde{Q}|^{-\frac{2s}{n}}(f,h_{\widetilde{Q}}^{\tilde{\varepsilon}})^2\sum_{\widetilde{K}\subsetneq P}\sum_{\tilde{\eta}\not\equiv1}|\widetilde{K}|^{-\frac{2s}{n}}(f,h_{\widetilde{K}}^{\tilde{\eta}})^2\\
        &\leq |Q|^{\frac{2s}{n}-1}\|f\|_{\dot{H}^s}^2\sum_{\widetilde{K}\subsetneq P}\sum_{\tilde{\eta}\not\equiv1}|\widetilde{K}|^{-\frac{2s}{n}}(f,h_{\widetilde{K}}^{\tilde{\eta}})^2.
    \end{align*}
    To estimate the bracketed term in~\eqref{E:high_reg_02}, note first that $\widetilde{Q}\subseteq P$, whence $h_P^\varepsilon(\widetilde{Q})=\pm|P|^{-1/2}\leq |P|^{-1/2}$ and the same holds for $\widetilde{K}\subseteq P$. Moreover, the number of admissible indices $\varepsilon$ is $2^n-1$. Thus,
    \begin{align*}
       &\sum_{\substack{\widetilde{Q}\subsetneq P\subseteq Q\\ \widetilde{K}\subsetneq P\subseteq Q}}\sum_{\varepsilon\not\equiv1}|P|^{-\frac{2s}{n}} |\widetilde{Q}|^{\frac{2s}{n}}h_P^\varepsilon(\widetilde{Q})|\widetilde{K}|^{\frac{2s}{n}}h_P^\varepsilon(\widetilde{K})
       \leq \sum_{\substack{\widetilde{Q}\subsetneq P\subseteq Q\\ \widetilde{K}\subsetneq P\subseteq Q}}\sum_{\varepsilon\not\equiv1}|P|^{-\frac{2s}{n}-1} |\widetilde{Q}|^{\frac{2s}{n}}|\widetilde{K}|^{\frac{2s}{n}}\\
       &\leq (2^n-1)\!\!\!\sum_{\substack{\widetilde{Q}\subsetneq P\subseteq Q\\ |\widetilde{K}|\leq |\widetilde{Q}|}}|P|^{-\frac{2s}{n}-1} |\widetilde{Q}|^{\frac{2s}{n}}|\widetilde{K}|^{\frac{2s}{n}}
       +(2^n-1)\!\!\!\sum_{\substack{\widetilde{K}\subsetneq P\subseteq Q\\ |\widetilde{Q}|\leq |\widetilde{K}|}}|P|^{-\frac{2s}{n}-1} |\widetilde{Q}|^{\frac{2s}{n}}|\widetilde{K}|^{\frac{2s}{n}}\\
       &\leq 2(2^n-1)\!\!\!\sum_{\widetilde{Q}\subsetneq P\subseteq Q}|P|^{-\frac{2s}{n}-1} |\widetilde{Q}|^{\frac{4s}{n}}
       \leq 2(2^n-1)\!\!\!\sum_{\widetilde{Q}\subsetneq P\subseteq Q}\Big(\frac{|\widetilde{Q}|}{|P|}\Big)^{\frac{2s}{n}} |P|^{\frac{2s}{n}-1}\\
       &<2^n|Q|^{\frac{2s}{n}-1}\sum_{\widetilde{Q}\subsetneq P\subseteq Q}\Big(\frac{|\widetilde{Q}|}{|P|}\Big)^{\frac{2s}{n}}
       \apprle_{s,n}|Q|^{\frac{2s}{n}-1},
    \end{align*}
    where in the last line we use the fact that $\frac{2s}{n}>1$ and $|\widetilde{Q}|<|P|<|Q|$.
\end{proof}

\begin{lemma}\label{L:estimate_high_reg_help}
    Let $s>\frac{n}{2}$. For any $P\in\mathcal{D}$ with $|P|<1$,
    \begin{equation}\label{E:estimate_high_reg_help}
        \sum_{\varepsilon\not\equiv1}\sum_{\substack{\tilde{\varepsilon}\not\equiv1\\ \tilde{\varepsilon}\not\equiv\varepsilon}}(f,h_P^{\varepsilon+\tilde{\varepsilon}})^2\leq 2^n\|f\|_{\dot{H}^s(\mathbb{R}^n)}^2.
    \end{equation}
\end{lemma}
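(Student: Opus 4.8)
The plan is to re-index the double sum on the left-hand side of~\eqref{E:estimate_high_reg_help} by the combined index $\eta:=\varepsilon+\tilde\varepsilon$ from~\eqref{E:def_index_sum}, which collapses it to a single-scale block of the seminorm~\eqref{E:def_Hs_seminorm}. First I would observe that, for a fixed $\eta$, one has $\varepsilon+\tilde\varepsilon=\eta$ if and only if $\tilde\varepsilon=\varepsilon+\eta$, so the pairs contributing to a given $\eta$ are parametrized by $\varepsilon$ alone; then I would count how many of those pairs meet the three conditions $\varepsilon\not\equiv 1$, $\tilde\varepsilon\not\equiv 1$ and $\tilde\varepsilon\not\equiv\varepsilon$ present in the sum. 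The conditions $\varepsilon\not\equiv 1$ and $\varepsilon+\eta\not\equiv 1$ forbid exactly the two values $\varepsilon\equiv 1$ and $\varepsilon=(1,\ldots,1)+\eta$, which are distinct precisely because $\eta\neq(0,\ldots,0)$; hence each $\eta$ that can occur is attained by exactly $2^n-2$ pairs. Moreover $\tilde\varepsilon\not\equiv\varepsilon$ forces $\eta\neq(0,\ldots,0)$, and by the standing convention (reflected in the definition of $\mathscr{S}_{\mathcal{D}}(\mathbb{R}^n)$ and of the Haar system) $h_P^\eta$ is present only for $\eta\not\equiv 1$, so the indices $\eta$ that actually appear form a subset of $\{0,1\}^n\setminus\{(0,\ldots,0),(1,\ldots,1)\}$.

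Consequently,
\begin{equation*}
\sum_{\varepsilon\not\equiv1}\sum_{\substack{\tilde{\varepsilon}\not\equiv1\\ \tilde{\varepsilon}\not\equiv\varepsilon}}|(f,h_P^{\varepsilon+\tilde{\varepsilon}})|^2
=(2^n-2)\sum_{\eta}|(f,h_P^{\eta})|^2
\le(2^n-2)\sum_{\eta\not\equiv 1}|(f,h_P^{\eta})|^2,
\end{equation*}
where the middle sum runs over the indices identified above. The single-scale quantity on the right is then controlled by retaining, in the seminorm~\eqref{E:def_Hs_seminorm}, only the contribution of the cube $P$, namely
\begin{equation*}
|P|^{-\frac{2s}{n}}\sum_{\eta\not\equiv 1}|(f,h_P^{\eta})|^2\le\sum_{\substack{Q\in\mathcal{D}\\ \varepsilon\not\equiv 1}}|Q|^{-\frac{2s}{n}}|(f,h_Q^{\varepsilon})|^2=\|f\|_{\dot H^s(\mathbb{R}^n)}^2,
\end{equation*}
whence $\sum_{\eta\not\equiv 1}|(f,h_P^{\eta})|^2\le|P|^{\frac{2s}{n}}\|f\|_{\dot H^s(\mathbb{R}^n)}^2$. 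Since $|P|<1$ and $s>0$ give $|P|^{\frac{2s}{n}}\le 1$, the two displays combine to yield $\sum_{\varepsilon\not\equiv1}\sum_{\tilde\varepsilon\not\equiv1,\,\tilde\varepsilon\not\equiv\varepsilon}|(f,h_P^{\varepsilon+\tilde\varepsilon})|^2\le(2^n-2)\|f\|_{\dot H^s(\mathbb{R}^n)}^2\le 2^n\|f\|_{\dot H^s(\mathbb{R}^n)}^2$, which is~\eqref{E:estimate_high_reg_help}.

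I expect the only genuinely delicate point to be the combinatorial bookkeeping of the first paragraph, and within it the need to confirm that the all-ones index contributes nothing: a pair with $\varepsilon+\tilde\varepsilon\equiv 1$ would instead produce a term comparable to $|P|\,\langle f\rangle_P^2$, and such a term is \emph{not} dominated by $\|f\|_{\dot H^s(\mathbb{R}^n)}$ because the large-scale Haar coefficients of $f$ enter the seminorm only with the small weight $|Q|^{-\frac{2s}{n}}$. Once this exclusion is in place --- equivalently, once $h_P^{\varepsilon+\tilde\varepsilon}$ is read as present only when $\varepsilon+\tilde\varepsilon\not\equiv 1$ --- the remaining work is the elementary multiplicity count and the one-line extraction of a single scale from the seminorm, so no further difficulty is anticipated.
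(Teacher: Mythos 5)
Your proof is correct and follows essentially the same route as the paper's: bound the multiplicity with which each Haar coefficient $(f,h_P^\eta)^2$ occurs (you get the sharper count $2^n-2$; the paper settles for $\#\{\varepsilon\not\equiv 1\}\leq 2^n$), then extract the single scale $P$ from the seminorm~\eqref{E:def_Hs_seminorm} using $|P|<1$. Your explicit remark that pairs with $\varepsilon+\tilde{\varepsilon}\equiv 1$ must be read as absent --- since $|P|\,\langle f\rangle_P^2$ is \emph{not} controlled by $\|f\|_{\dot{H}^s}$ --- is the one genuinely delicate point, and the paper handles it only implicitly through its relabeling convention.
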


\begin{proof}
    Note that the left hand side of~\eqref{E:estimate_high_reg_help} may alternatively be written as 
    \[
    \sum_{\varepsilon\not\equiv1}\sum_{\substack{\tilde{\varepsilon}\not\equiv1\\ \tilde{\varepsilon}\not\equiv\varepsilon}}(f,h_P^{\tilde{\varepsilon}})^2.
    \]
    Moreover, since $|P|<1$ and $\frac{2s}{n}>1$, it follows that $1\leq |P|^{-\frac{2s}{n}}$ and therefore the expression above is bounded by
    \begin{equation*}
        \#\{\varepsilon\not\equiv1\}\sum_{\tilde{\varepsilon}\not\equiv1}(f,h_P^{\tilde{\varepsilon}})^2
        \leq 2^n\sum_{\tilde{\varepsilon}\not\equiv1}(f,h_P^{\tilde{\varepsilon}})^2
        \leq 2^n\sum_{\tilde{\varepsilon}\not\equiv1}|P|^{-\frac{2s}{n}}(f,h_P^{\tilde{\varepsilon}})^2\leq 2^n\|f\|_{\dot{H}^s(\mathbb{R}^n)}
    \end{equation*}
    as we wanted to prove.
\end{proof}

With the previous lemmata on hand, we can now prove the algebra property of $H^s(\mathbb{R})$ in the high regularity range.

\begin{proposition}
    Let $\frac{n}{2}<s<1$. Then, $f\in H^s(\mathbb{R}^n)$ implies $f^2\in H^s(\mathbb{R}^n)$.
\end{proposition}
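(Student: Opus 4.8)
The plan is to prove the quantitative bound $\|f^2\|_{H^s(\mathbb{R}^n)}\apprle_{s,n}\|f\|_{H^s(\mathbb{R}^n)}^2$ for $f\in\mathscr{S}_{\mathcal{D}}(\mathbb{R}^n)$, which immediately gives closure under squaring and hence the algebra property via the reduction noted at the beginning of the section. Two already established facts carry the argument: Lemma~\ref{L:norm_equivalence}, which allows one to discard all dyadic cubes of volume $\geq 1$ from the $H^s$-norm, and Lemma~\ref{L:estimate_high_reg}, the localized estimate bounding the $\dot{H}^s$-mass of $f^2$ on a cube $Q$ by that of $f$ (which itself rests on Lemma~\ref{L:estimate_high_reg_help}).

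First I would dispose of the $L^2$-part of $\|f^2\|_{H^s}$: by the Morrey-type embedding of Proposition~\ref{P:Morrey} one has $\|f\|_{L^\infty}\apprle_{s,n}\|f\|_{H^s}$, hence $\|f^2\|_{L^2}^2=\int_{\mathbb{R}^n}|f|^4\,dx\leq \|f\|_{L^\infty}^2\|f\|_{L^2}^2\apprle_{s,n}\|f\|_{H^s}^4$. For the seminorm part, Lemma~\ref{L:norm_equivalence} reduces the task to estimating $\Sigma:=\sum_{|P|<1}\sum_{\varepsilon\not\equiv1}|P|^{-\frac{2s}{n}}|(f^2,h_P^\varepsilon)|^2$. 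The key bookkeeping step is that each dyadic cube $P$ with $|P|<1$ is strictly contained in exactly one dyadic cube $Q$ with $|Q|=1$ (its ancestor at unit scale), and conversely, so that $\Sigma=\sum_{|Q|=1}\sum_{P\subsetneq Q}\sum_{\varepsilon\not\equiv1}|P|^{-\frac{2s}{n}}|(f^2,h_P^\varepsilon)|^2$. For each such $Q$ one has $|Q|=1\leq 1$, so Lemma~\ref{L:estimate_high_reg} applies with $|Q|^{\frac{2s}{n}-1}=1$; after enlarging $\sum_{P\subsetneq Q}$ to $\sum_{P\subseteq Q}$ this yields $\sum_{P\subsetneq Q}\sum_{\varepsilon\not\equiv1}|P|^{-\frac{2s}{n}}|(f^2,h_P^\varepsilon)|^2\apprle_{s,n}\|f\|_{\dot{H}^s}^2\sum_{P\subseteq Q}\sum_{\varepsilon\not\equiv1}|P|^{-\frac{2s}{n}}|(f,h_P^\varepsilon)|^2$. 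Summing over the unit cubes $Q$ and noting that each $P$ with $|P|\leq 1$ is counted exactly once on the right, the right-hand sum collapses to $\sum_{|P|\leq 1}\sum_{\varepsilon\not\equiv1}|P|^{-\frac{2s}{n}}|(f,h_P^\varepsilon)|^2\leq\|f\|_{\dot{H}^s}^2$, so $\Sigma\apprle_{s,n}\|f\|_{\dot{H}^s}^4$. Combining this with the $L^2$-bound through Lemma~\ref{L:norm_equivalence} gives $\|f^2\|_{H^s}^2\apprle_{s,n}\|f\|_{H^s}^4$, and hence $f^2\in H^s(\mathbb{R}^n)$.

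I expect essentially no analytic obstacle at this stage: all the difficulty has already been absorbed into Lemma~\ref{L:estimate_high_reg} and its auxiliary Lemma~\ref{L:estimate_high_reg_help}, and what remains is a short assembly. The only point demanding care is the choice of the family of cubes over which one sums: the decomposition must be by \emph{unit-scale} ancestors, since that is exactly where the weight $|Q|^{\frac{2s}{n}-1}$ in Lemma~\ref{L:estimate_high_reg} becomes harmless and where the cutoff in Lemma~\ref{L:norm_equivalence} sits, and one must take care neither to overcount any $P$ nor to lose the harmless passage from $\sum_{P\subsetneq Q}$ to $\sum_{P\subseteq Q}$. The statement is phrased for $f\in\mathscr{S}_{\mathcal{D}}(\mathbb{R}^n)$; the general case follows from the estimate $\|f^2\|_{H^s}\apprle_{s,n}\|f\|_{H^s}^2$ by the usual density argument.
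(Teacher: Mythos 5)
Your proposal is correct and follows essentially the same route as the paper: handle the $L^2$-part of $\|f^2\|_{H^s}$ via the Morrey embedding of Proposition~\ref{P:Morrey}, discard large cubes via Lemma~\ref{L:norm_equivalence}, and tile the remaining small cubes by a grid at (near-)unit scale so that Lemma~\ref{L:estimate_high_reg} applies with a harmless factor $|Q|^{\frac{2s}{n}-1}$. The only cosmetic difference is that the paper sums over the grid of side length $2^{-1}$ while you use the unit-volume grid; both satisfy the hypothesis $|Q|\leq 1$ of Lemma~\ref{L:estimate_high_reg}, so the argument is unaffected.
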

\begin{proof}
    Let $\mathcal{D}_1$ be a partition of $\mathbb{R}^n$ by dyadic intervals of length $2^{-1}$. By virtue of Lemma~\ref{L:norm_equivalence}, Lemma~\ref{L:estimate_high_reg} and Proposition~\ref{P:Morrey} we obtain
    \begin{align*}
        \|f^2\|_{H^s(\mathbb{R}^n)}^2&\leq 2\|f^2\|_{L^2(\mathbb{R}^n)}^2+2\sum_{|Q|<1}\sum_{\varepsilon\not\equiv 1}|Q|^{-\frac{2s}{n}}(f^2,h_Q^\varepsilon)|^2\\ 
        &=2\|f^2\|_{L^2(\mathbb{R}^n)}^2+2\sum_{Q\in\mathcal{D}_1}\sum_{P\subseteq Q}\sum_{\varepsilon\not\equiv 1}|P|^{-\frac{2s}{n}}|(f^2,h_P^\varepsilon)|^2\\
        &\apprle_{n,s} \|f\|_{L^\infty(\mathbb{R}^n)}^2\|f\|_{L^2(\mathbb{R}^n)}^2+\|f\|_{\dot{H}^s}^2\sum_{Q\in\mathcal{D}_1}|Q|^{\frac{2s}{n}-1}\sum_{P\subseteq Q}\sum_{\varepsilon\not\equiv 1}|P|^{-\frac{2s}{n}}|(f,h_P^\varepsilon)|^2\\
        &\apprle_{n,s} \|f\|_{H^s(\mathbb{R}^n)}^2\bigg(\|f\|_{L^2(\mathbb{R}^n)}^2+2^{1-\frac{2s}{n}}\sum_{Q\in\mathcal{D}_1}\sum_{P\subseteq Q}\sum_{\varepsilon\not\equiv 1}|P|^{-\frac{2s}{n}}|(f,h_P^\varepsilon)|^2\bigg)\\
        &\apprle_{n,s} \|f\|_{H^s(\mathbb{R}^n)}^4.
    \end{align*}
\end{proof}

\bibliographystyle{amsplain}
\bibliography{Dyadic_Sobolev_Refs}
\end{document}